%
%
%
%
\documentclass{amsart}

\usepackage[utf8]{inputenc}
\usepackage{hyperref}
\hypersetup{
	colorlinks,
	citecolor=black,
	filecolor=black,
	linkcolor=black,
	urlcolor=black,
	linktoc=all
}
\hypersetup{colorlinks=true}
\usepackage{breakurl}

\usepackage{libertine}
\usepackage{libertinust1math}

\usepackage{amsmath}
\usepackage{amsfonts}
\usepackage{mathtools}

\usepackage{picinpar,moresize,xfrac,graphpap,dcolumn,wrapfig,graphicx}

\usepackage{subcaption}
\usepackage{stackengine}

\usepackage{tikz,pgfplots}
\usepackage{tikz-cd}
\usepackage{pgf}
\usepgfplotslibrary{colormaps}
\usetikzlibrary{pgfplots.colormaps}
\usetikzlibrary{arrows,automata,positioning,backgrounds}
\usepackage{rotating}
\pgfplotsset{compat=newest}
\pgfplotsset{plot coordinates/math parser=false}
\newlength\figureheight
\newlength\figurewidth

\usepackage{soul,array,calc,url,ragged2e,graphpap}
\urlstyle{rm}
\usepackage{booktabs} 
\usepackage{tabularx}
\usepackage{colortbl}
\usepackage{multirow}
\usepackage{hyphenat}
\usepackage{transparent}
\usepackage{mathrsfs}
\usepackage{mathtools}
\mathtoolsset{centercolon}
\usepackage{nicefrac}
\usepackage{units}
\usepackage[normalem]{ulem}
\usepackage{cancel}
\usepackage{pbox}
\usepackage{multicol}
\usepackage{cases}

\usepackage{cleveref}
\crefmultiformat{subequation}%
{\edef\crefstripprefixinfo{#1}(#2#1#3}%
{,#2\crefstripprefix{\crefstripprefixinfo}{#1}#3)}%
{,#2\crefstripprefix{\crefstripprefixinfo}{#1}#3}%
{,#2\crefstripprefix{\crefstripprefixinfo}{#1}#3)}


\makeatletter
\DeclareFontFamily{OMX}{MnSymbolE}{}
\DeclareSymbolFont{MnLargeSymbols}{OMX}{MnSymbolE}{m}{n}
\SetSymbolFont{MnLargeSymbols}{bold}{OMX}{MnSymbolE}{b}{n}
\DeclareFontShape{OMX}{MnSymbolE}{m}{n}{
    <-6>  MnSymbolE5
   <6-7>  MnSymbolE6
   <7-8>  MnSymbolE7
   <8-9>  MnSymbolE8
   <9-10> MnSymbolE9
  <10-12> MnSymbolE10
  <12->   MnSymbolE12
}{}
\DeclareFontShape{OMX}{MnSymbolE}{b}{n}{
    <-6>  MnSymbolE-Bold5
   <6-7>  MnSymbolE-Bold6
   <7-8>  MnSymbolE-Bold7
   <8-9>  MnSymbolE-Bold8
   <9-10> MnSymbolE-Bold9
  <10-12> MnSymbolE-Bold10
  <12->   MnSymbolE-Bold12
}{}
\let\llangle\@undefined
\let\rrangle\@undefined
\DeclareMathDelimiter{\llangle}{\mathopen}%
                     {MnLargeSymbols}{'164}{MnLargeSymbols}{'164}
\DeclareMathDelimiter{\rrangle}{\mathclose}%
                     {MnLargeSymbols}{'171}{MnLargeSymbols}{'171}
\makeatother

\newcommand{\figref}[1]{\textup{Fig.~\ref{#1}}}

\newcommand{\teqref}[1]{\textup{Eq.~(\ref{#1})}}

\newcommand{\secref}[1]{\textup{Section~\ref{#1}}}
\newcommand{\appref}[1]{\textup{Appendix~\ref{#1}}}

\newcommand{\thmref}[1]{\textup{Theorem~\ref{#1}}}
\newcommand{\lemref}[1]{\textup{Lemma~\ref{#1}}}

\newcommand{\defref}[1]{\textup{Definition~\ref{#1}}}

\newcommand{\corref}[1]{\textup{Corollary~\ref{#1}}}


\def\etal{\emph{et al.}}

\def\ie{\emph{i.e.}}
\def\eg{\emph{e.g.}}

\def\resp{resp.}

\def\RR{\mathbb{R}}
\def\SS{\mathbb{S}}

\def\ZZ{\mathbb{Z}}

\def\cB{\mathcal{B}}

\def\cF{\mathcal{F}}
\def\cG{\mathcal{G}}

\def\cL{\mathcal{L}}

\def\cT{\mathcal{T}}

\def\ba{\mathbf{a}}
\def\bb{\mathbf{b}}


\usepackage{bbm}
\DeclareSymbolFont{bbold}{U}{bbold}{m}{n}
\DeclareSymbolFontAlphabet{\mathbbold}{bbold}






\def\det{\operatorname{det}}

\def\id{\operatorname{id}}

\DeclareMathOperator*{\argmin}{argmin}



















\usepackage[backend=biber, style=numeric, natbib=true, maxbibnames=6, giveninits=true]{biblatex}
\bibliography{main}

\newtheorem{theorem}{Theorem}[section]
\newtheorem{lemma}[theorem]{Lemma}
\newtheorem{corollary}[theorem]{Corollary}

\theoremstyle{definition}
\newtheorem{definition}[theorem]{Definition}

\theoremstyle{remark}
\newtheorem{remark}[theorem]{Remark}

\numberwithin{equation}{section}




\begin{document}

\title{Elastic Curves with Variable Bending Stiffness}

\author{Oliver Gross}
\address{University of California, San Diego, 9500 Gilman Dr, La Jolla, CA 92093, USA}
\curraddr{}
\email{ogross@ucsd.edu}
\thanks{}

\author{Ulrich Pinkall}
\address{Technische Universit\"at Berlin, Str. des 17. Juni 136, 10623, Berlin, Germany}
\curraddr{}
\email{pinkall@math.tu-berlin.de}
\thanks{}

\author{Moritz Wahl}
\address{Universit\"at Regensburg, Universit\"atsstr. 31, 93053, Regensburg, Germany}
\curraddr{}
\email{moritz.wahl@stud.uni-regensburg.de}
\thanks{}



\date{\today} 



\begin{abstract}
We study stationary points of the bending energy of curves \(\gamma\colon[a,b]\to\RR^n\) 
subject to constraints on the arc-length and the curve's holonomy while simultaneously allowing for a variable bending stiffness along the arc-length of the curve. Physically, this can be understood as a model for an elastic wire with isotropic cross-section of varying thickness. We derive the corresponding Euler-Lagrange equations for variations that are compactly supported away from the end points thus obtaining characterizations for elastic curves with variable bending stiffness. Moreover, we provide a collection of alternative characterizations, \eg, in terms of the curvature function. Adding to numerous known results relating elastic curves to dynamics, we explore connections between elastic curves with variable bending stiffness, variable length pendulums and the flow of vortex filaments with finite thickness.

\bigskip
\noindent
   \textit{MSC (2020).} Primary 53A04; Secondary 53C21, 53C42, 53C44, 74K10, 74B20.

\smallskip
   \noindent
   \textit{Keywords.} Elastic curves, variable bending stiffness, Euler-Lagrange equations, pendulum equation, vortex filament flow.
\end{abstract}

\maketitle

\section{Introduction}\label{sec:intro}
In mathematics and the natural sciences alike, one-dimensional flexible structures have diverse applications ranging from architectural beams to molecular modeling. Despite a longstanding history of research, they remain a prominent research topic. 

Stationary points of the so-called \emph{bending energy} on the space of curves \(\gamma\colon[a,b]\to\RR^n\), are used to model, \eg, the bent shapes of an ideal infinitesimally thin elastic rod (without stretching)~\cite{LevienPhD}. This classical problem dates back to the 13th century and the current state of knowledge is still largely based on the work of mathematical pioneers, such as Bernoulli and Euler~\cite{LevienPhD}. Only the rigorous definition of the curvature \(\kappa\) of a planar curve \(\gamma\colon[a,b]\to\RR^2\) paved the way for their seminal work, culminating in the characterization of \emph{plane elastic curves} as the stationary points of the energy functional (\cite{CurvatureHistory}, \figref{fig:EulerElastica}) \[\tfrac{1}{2}\int\kappa^2\,ds.\]
It was soon discovered that planar elastic curves relate to a variety of other phenomena in the natural sciences~\cite{LevienPhD}. 
For regular space curves \(\gamma\colon[a,b]\to\RR^3\) the bending energy is given by 
 \[\cB(\gamma)=\tfrac{1}{2}\int_a^b\langle \tfrac{d^2\gamma}{ds^2},\tfrac{d^2\gamma}{ds^2} \rangle\,ds.\] 
 Kirchhoff realized the importance of torsion and related the problem of \emph{elastic curves} in three-dimensions with the dynamics of a spinning top~\cite{kirchhoff1859ueber}. For more in depths reviews of the history of elastic curves see, \eg,~\cite{LevienPhD, Goss:2003:Phd}.
 \begin{figure}[h]
     \centering
     \includegraphics[width = \textwidth]{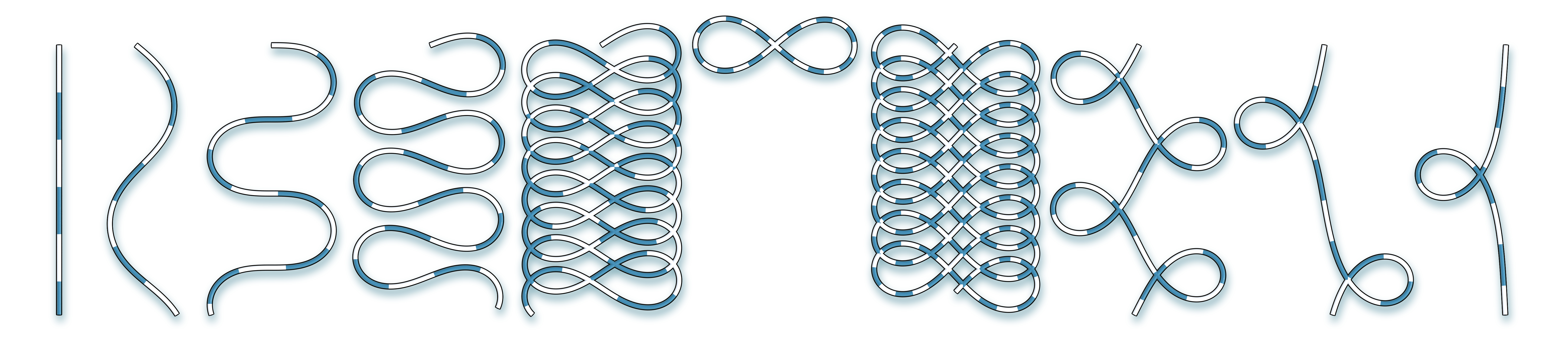}
     \caption{
     Examples of planar elastic curves, \ie, solutions to the Euler-Lagrange equations resulting from considering variations of the bending energy with a length-constraint.
     }
     \label{fig:EulerElastica}
 \end{figure}

 Elastic curves have broad utility, praised for their aesthetics and practicality. They are used as models for plant stems~\cite{ClimbingPlants2006, ClimbingPlants1998}, DNA strands~\cite{DNAmechanics2005, DNA-Rings, DNAtwist2004}, for preventing kinks and loops in marine cables~\cite{LoopFormation, CableHockling}, guiding track layouts, and even in medical applications such as surgical wires~\cite{Spillmann2010InextensibleElRods}. In design and computer graphics they serve as decorative elements in~\cite{OrnamentalCurves} and play a crucial role in simulation software, particularly in complex tasks like hair simulation~\cite{DualStripWeaving, CoRdESpillmann2007, SuperhelicesForHair, HairSimKmoch2009}. The discretization~\cite{DiscreteElasticRods, BobenkoSurisDiscreteKirchhoff} and efforts to approximate elastic curves with computationally more efficient splines~\cite{SplineApprox} form a crucial facet of current research.

Elastic curves' energy-minimizing characteristics and their direct link to material bending make them appealing for architecture and modern fabrication. Fabri\-cation-aware design and cost-effective manufacturing processes, such as active bending~\cite{ActiveBending2013}, leverage certain material properties. Bending and twisting, once challenges, are now utilized as tools, reflecting a shift ``from failure to function''~\cite{BaekReis2019}. This shift has amplified the interest in inverse problems, which seek to control material parameters to achieve specific shapes, a significant facet of elasticity research~\cite{IsotropicKirchhoffRods2018, Hafner21, Hafner23}. Economic considerations, including ease of manufacturing, transportation, and installation for curved shapes, are pivotal in architecture and fabrication. Techniques like active bending offer numerous advantages~\cite{ActiveBending2013}, and morphing structures leverage bending to attain or modify their desired shapes~\cite{TaperedElasticae2020}.

Various surface design methods extend elastic curve theory to approximate \(2\)-dimen\-sional curved shapes using networks of single rods. These approaches are instrumental in modern fabrication, including \emph{rod meshes} and \emph{gridshells}~\cite{BaekReis2019, CosseratNets, PerezFlexibleRodMeshes2015}, and \emph{deployable structures}~\cite{PanettaXShells2019, PillweinDeplGrids21}. Another modeling technique combines elastic curves with minimal surfaces, as seen in Plateau surfaces~\cite{Giusteri2017, CompDesignPlateau}. In this context, the energy of the whole system depends on the energy of the bounding elastic curve~\cite{MinimalSurfacesBernatzki, MinSurfElasticaBounded2012}. Also closed curves and ``elastic knots'' offer a range of interesting results, both in theoretical studies~\cite{Tjaden:1991:EeK, LangerSinger2006Knots} and practical applications~\cite{ElasticKnots23}.

On a theoretical level, elastic curves are known to relate to the motion of a spinning top's symmetry axis~\cite{kirchhoff1859ueber} and to satisfy the pendulum equation~\cite{PinkallGross23:DG}. Langer and Singer made significant contributions~\cite{langer1984total, LangerSinger1996}, elucidating connections between Kirchhoff rods and soliton theory, which lead to integrable Hamiltonian systems---a development built upon Hasimoto's analogy between vortex filament flow and the nonlinear Schrödinger equation~\cite{hasimoto_1972}. Specifically, the curves' evolution under the \emph{vortex-filament flow} is (up to reprarametrization) a rigid motion. Moreover, they can be described as the orbits of charged particles moving in a magnetic field~\cite{chern_knoppel_pedit_pinkall_2020, PinkallGross23:DG}. The tangent vectors of stationary points of the bending energy with no constraints on the holonomy of the curve are known to relate to the motion of the axis of a spinning top~\cite{kirchhoff1859ueber} and solve the pendulum equation~\cite{PinkallGross23:DG}.

\begin{figure}[ht]
    \centering
    \includegraphics[width=0.35\textwidth]{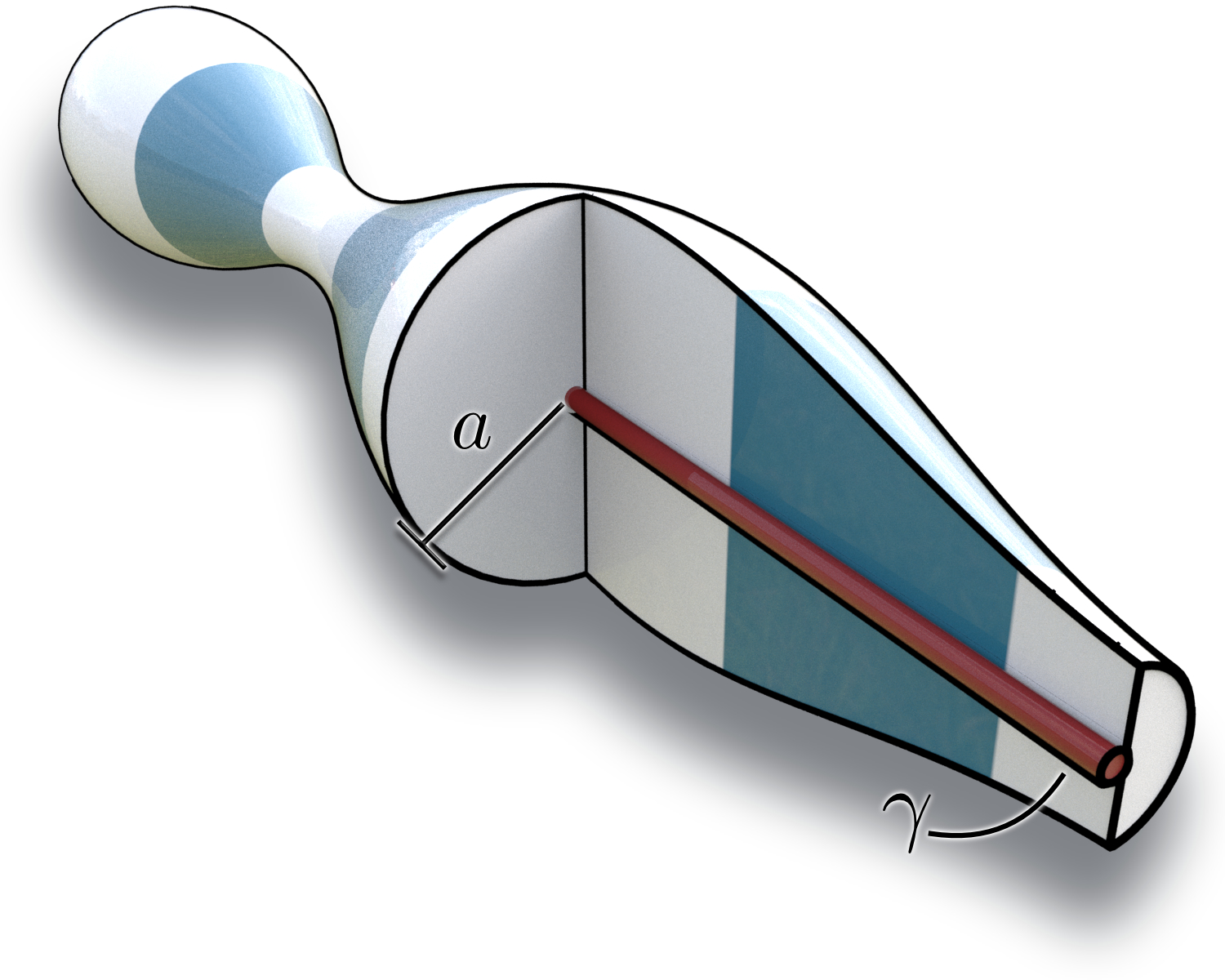}
    \hspace{2.em}
    \includegraphics[width=.45\columnwidth]{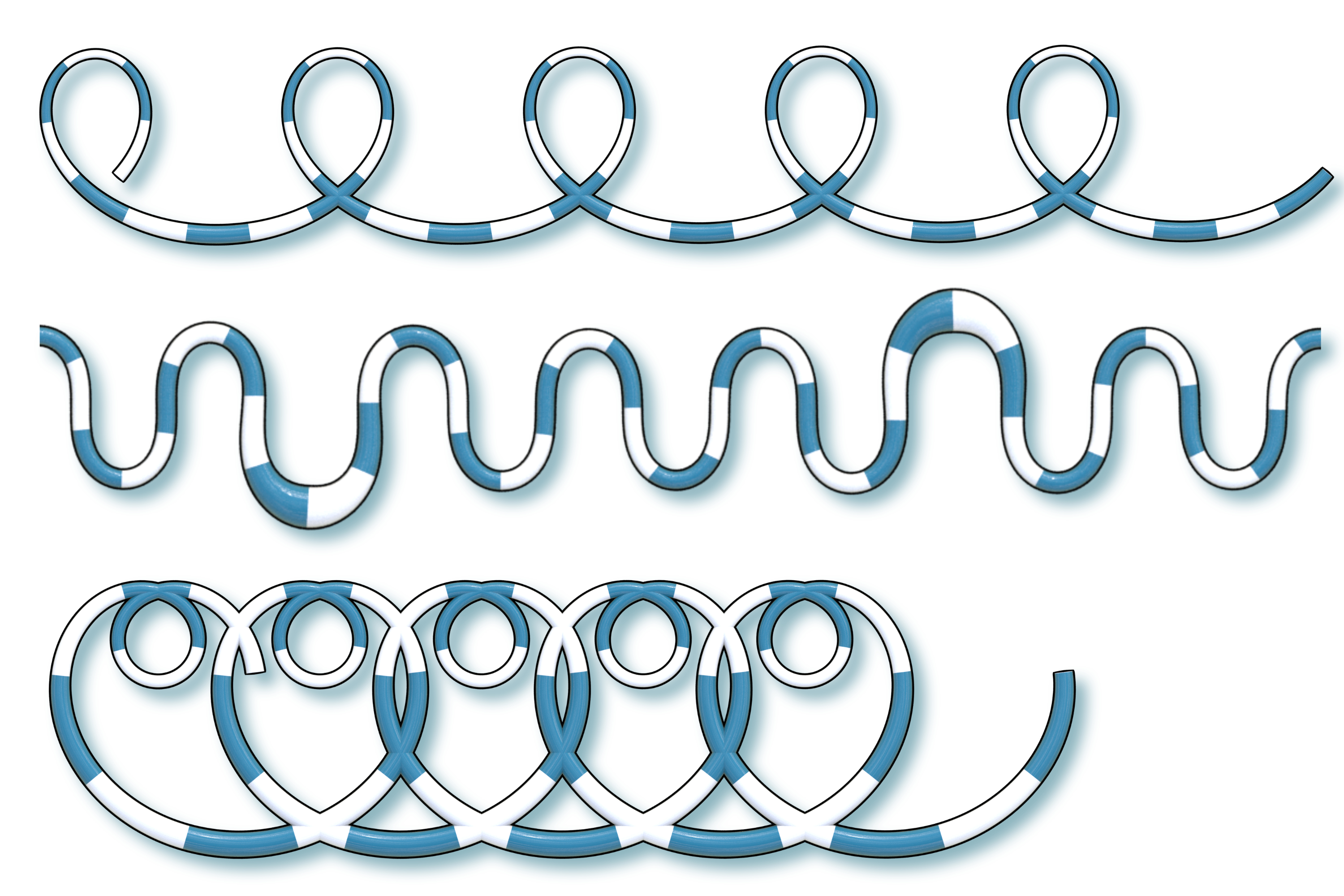}
  \caption{Elastic rods with circular cross-section of varying thickness are modeled by weighting the integrand of the bending energy with a positive function \(\varrho\) proportional to the thickness \(a>0\) (left). The resulting elastica are less bent in regions with greater stiffness (right).}
  \label{fig:VariableThicknessInset}
\end{figure}
Complementing previous work, our main focus is on space curves \(\gamma\colon[a,b]\to\RR^n\) that exhibit an isotropic resistance to bending which may vary along the arc-length of the curve. We will refer to such a curve as a curve with \emph{variable bending stiffness}. From a physical viewpoint, these elastic curves model idealized elastic rods with circular cross-section of varying diameter. The study of curves with additional free parameters such as thickness has become an important task in the natural sciences, as they appear as key structural motifs in a variety of contexts (see, \eg,~\cite{moffatt1990energy, GonzalesMaddocks:1999:GCR, cantarella2002minimum, MOULTON2013398} and references therein). The variable bending stiffness is formalized by weighting the integrand of the bending energy by a strictly positive function \[\varrho\colon[a,b]\to\RR_{>0},\] which represents the \emph{bending stiffness} of the curve \(\gamma\) along the arc-length. Consequently, the bending energy takes the form
\begin{align}
    \label{eq:BendingEnergyVariableStiffnessNotDef}
    \cB_\varrho(\gamma) = \tfrac{1}{2}\int_a^b\varrho\langle \tfrac{d^2\gamma}{ds^2},\tfrac{d^2\gamma}{ds^2} \rangle\,ds.
\end{align}
Previous investigations on this topic focused on closed planar elastic curves whose stiffness depends on an additional density variable~\cite{helmers2011snapping, brazda2021bifurcation}, or developed computational methods and applications~\cite{palmer2020anisotropic, Hafner21, Hafner23}.  

Inspired by~\cite{Hafner23}, who considered a significantly larger configuration space allowing for anisotropic cross sections and focused on the inverse problem, on a conceptual level, our approach differs significantly from previous work (see also, \eg,~\cite{Dall:2024:CCC}). We consider a somewhat more restrictive variational problem asking for isotropic cross-sections, thus remaining closer to the classical problem. However, similar to the \(2\)-dimensional case, where the step from a ``bending beam'' to solving the formal Euler-Lagrange equations reveals considerably more interesting curves, we obtain a large number of novel elastic curves with variable bending stiffness. Most importantly, we observe connections to dynamical systems, which are already well established in the classical setup~\cite{PinkallGross23:DG}. Further investigation of these aspects seems to be an exciting avenue for future research on their own.

\subsection{Structure of the Article}
The article is structured as follows: in \secref{sec:intro} we outline related work and place our own work in the context of existing literature. In \secref{sec:Preliminaries} we fix the notation and introduce preliminary definitions and theorems for further discussion. The Euler-Lagrange equations for different types of elastic curves with variable bending stiffness characterized by constraints on the arc-length and/or holonomy are derived in \secref{sec:ELEqs}. The Euler-Lagrange equations can be reformulated into equivalent characterizations, which we do in \secref{sec:EquivalentCharecterizations}, while in \secref{sec:TheCurvatureOfElasticCurves} we express them in terms of the curvature \(\kappa\colon[a,b]\to\RR^{n-1}\) function. Last, in  \secref{sec:ConnectionsWithDynamicalSystems}, we explore how established connections to dynamical systems, such as physical pendulums or the vortex filament flow, could translate to our novel setup.

\section{Preliminaries}\label{sec:Preliminaries}
In this section, we briefly introduce preliminaries and fix the notation used throughout the article. We will denote the set of all smooth functions from an interval \([a,b]\) into \(\RR^n\) by \(C^{\infty}([a,b];\RR^n)\). For our purposes we always assume that \(n\geq 2\). A \emph{curve} is a map \(\gamma\in C^{\infty}([a,b];\RR^n)\) with \(\gamma' \neq 0\).  Denoting the standard Euclidean norm on \(\RR^n\) by \(|\cdot|\), we refer to \(|\gamma'|\) as the \emph{arc-length} of \(\gamma\)  and the \emph{length} of a curve is given by
    \begin{align*}
        \cL(\gamma)\coloneqq \int_a^b|\gamma'|.
    \end{align*} 
A curve is said to be parameterized by \emph{arc-length} if \(|\gamma'|\equiv1\). The \emph{derivative} of a function \(g\in C^{\infty}([a,b];\RR^k)\) \emph{with respect to the arc-length} of \(\gamma\in C^{\infty}([a,b];\RR^n)\) is defined as \(\tfrac{dg}{ds}\coloneqq \tfrac{g'}{|\gamma'|}\).
In particular, the \emph{unit tangent} vector field \(T\in C^{\infty}([a,b];S^{n-1})\) of a curve is given by \(T\coloneqq \tfrac{d\gamma}{ds}\).

\subsection{The Bending Energy with Bending Stiffness}
The norm of the derivative with respect to the arc-length of the unit tangent vector, measures the failure of the curve  \(\gamma\) to be a straight line segment. Integrating the squared norm of this error over the arc-length therefore measures how much the curve bends in space. Consequently the \emph{bending energy} of a curve \(\gamma\in C^{\infty}([a,b];\RR^n)\) is given by 
    \begin{align}
         \label{eq:BendingEnergy}
        \cB(\gamma)\coloneqq\tfrac{1}{2}\int_a^b\langle \tfrac{dT}{ds},\tfrac{dT}{ds} \rangle\,ds.
    \end{align}
Historically, the motivation for studying elastic curves has been to gain a better understanding of the shapes that a thin elastic cable takes when the end points are held fixed. We extend the theory in the sense that we do not assume for a cable of constant thickness, but allow for varying thickness which is accounted for by weighting the bending energy density along the curve.
\begin{definition}[Bending energy with bending stiffness]\label{def:BendingEnergyVariableStiffness}
    The \emph{bending energy} of a curve \(\gamma\colon[a,b]\to\RR^n\) with \emph{bending stiffness} \(\varrho\colon[a,b]\to\RR_{>0}\) is given by 
    \begin{align}
         \label{eq:BendingEnergyVariableStiffness}
        \cB_\varrho(\gamma)\coloneqq\tfrac{1}{2}\int_a^b\varrho\langle \tfrac{dT}{ds},\tfrac{dT}{ds} \rangle\,ds.
    \end{align}
\end{definition}

\subsection{Holonomy of a Space Curve}
A normal field \(Z\colon[a,b]\to\RR^n\) satisfies \(\langle Z, T\rangle = 0\) and is said to be \emph{parallel} if \(Z'=\lambda T\) for some \(\lambda\in C^\infty(M)\). By solving a corresponding initial value problem any normal vector to a curve can be extended to a unique parallel normal field along \(\gamma\)~\cite[Thm. 4.3]{PinkallGross23:DG}. The vector \(Z_b\) of the solution \(Z\) to this initial value problem for an initial normal vector \(Z_0\) at \(\gamma(a)\) is called the \emph{parallel transport} of \(Z_0\) along the curve \(\gamma\).

\begin{definition}\label{def:TotalTorsion}
    For a curve \(\gamma\in C^{\infty}([a,b];\RR^3)\) let \(W=(W_a, W_b)\in T(a)^\perp\times T(b)^\perp\) be a pair of unit vectors. Then, the \emph{holonomy} of the curve \(\gamma\) with respect to \(W\) is the unique angle \(\mathcal{T}_W\in {\RR}/_{2\pi\ZZ}\) such that 
    \begin{align}
        Z_b=\cos(\mathcal{T}_W)\,W_b + \sin(\mathcal{T}_W)\, T(b)\times W_b,
    \end{align}
    where \(Z_b\) is the unique vector obtained from a parallel transport of \(W_a\) along \(\gamma\).
\end{definition}

Note that a curve's holonomy does not depend on the bending stiffness \(\varrho\), but only the geometry of the curve \(\gamma\).

\subsection{Variations of Curves, Length and Holonomy}
In the forthcoming sections we will define a hierarchy of ``elastic curves'' as stationary points of bending energy with bending stiffness \eqref{eq:BendingEnergyVariableStiffness} under perturbations with constrained length, arc-length or holonomy. For \(g\in C^{\infty}([a,b];\RR^n)\) and \(\epsilon >0\), a \emph{smooth variation} of \(g\) is a one-parameter family 
        \begin{align}
            t\mapsto g_t \in C^{\infty}([a,b];\RR^n),  
        \end{align}
        where \(t\in [-\epsilon,\epsilon]\) and which satisfies \(g_0=g\) and such that the map
        \begin{align*}
            [-\epsilon,\epsilon]\times[a,b]\to\RR^n,\ (t,x)\mapsto \gamma_t(x)
        \end{align*}
        is smooth. Given a smooth variation of a map \(g\in C^{\infty}([a,b];\RR^n)\), also \(t\mapsto g'_{t}\), as well as \(t\mapsto \mathring g_{t}\), where \(\mathring g_{t}\in C^{\infty}([a,b];\RR^n)\) is defined as \[\mathring g_t(x)\coloneqq\tfrac{d}{d\tau}\big\vert_{\tau=t}g_\tau(x),\]
are smooth. To simplify the notation, we will omit the index when we evaluate at time \(t=0\) and write \(\mathring g = \mathring g_0\). Moreover, we denote the variation of a smooth functional \(\cF\) on \(C^{\infty}([a,b];\RR^n)\) corresponding to a smooth variation \(t\mapsto g_t\) with variational vector field \(\mathring g\) by \[d\cF(\mathring g)\coloneqq \tfrac{d}{d\tau}\big\vert_{\tau=0}\cF(g_\tau).\]
Our main object of interest are smooth variations \(t\mapsto\gamma_t\in C^{\infty}([a,b];\RR^n)\) of curves, for which we refer to 
\begin{align}
    \mathring \gamma\in C^{\infty}([a,b];\RR^n)
\end{align}
as the \emph{variational vector field}. A variation \(t\mapsto \gamma_t\) of a curve \(\gamma\in C^{\infty}([a,b];\RR^n)\) is said to have \emph{compact support in the interior} of \([a,b]\) if there is \(\delta>0\) such that for all \(x \in [a, a + \delta] \cup [b - \delta, b]\) it holds that \(\gamma_t (x) = \gamma (x)\). We denote the vector space of functions with support in the interior of \([a,b]\) by \(C_0^{\infty}([a,b];\RR^n)\) and treat variations and corresponding variational vector fields as synonyms. Some useful identities are collected in 
\begin{lemma}[{\cite[Ch. 2]{PinkallGross23:DG}}]\label{thm:HelpfulDerivativeRules}
    Let \(\gamma\in C^{\infty}([a,b];\RR^n)\) and \(g\in C^{\infty}([a,b];\RR^n)\). Then,
    \begin{enumerate}
        \item \((\mathring g)' = (g')^{\mathring{}}\).
        \item \((ds)^{\mathring{}}= \langle \tfrac{d \mathring\gamma}{ds}, T \rangle ds\).
        \item \(\left(\tfrac{dg}{ds}\right)^{\mathring{}} = \tfrac{d\mathring g}{ds} - \langle \tfrac{d \mathring\gamma}{ds}, T \rangle\tfrac{dg}{ds}\). \label{eq:Lem2-3partiii}
    \end{enumerate}
\end{lemma}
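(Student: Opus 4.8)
The plan is to establish all three identities by direct computation, relying only on two inputs: the smoothness of the maps $(t,x)\mapsto\gamma_t(x)$ and $(t,x)\mapsto g_t(x)$ underlying the variations, together with the defining formulas $\tfrac{dg}{ds}=g'/|\gamma'|$ and $ds=|\gamma'|\,dx$.

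For (1) I would observe that, by definition, $(\mathring g)'$ equals $\partial_x\partial_t g_t(x)\big|_{t=0}$ while $(g')^{\mathring{}}$ equals $\partial_t\partial_x g_t(x)\big|_{t=0}$; since $(t,x)\mapsto g_t(x)$ is smooth, Schwarz's theorem on the equality of mixed partial derivatives yields the claim. I would then use (1) freely in what follows. For (2) I would differentiate the one-form $ds=|\gamma_t'|\,dx$ in $t$: writing $|\gamma_t'|=\langle\gamma_t',\gamma_t'\rangle^{1/2}$ and using $\gamma_t'\neq0$ gives $\partial_t|\gamma_t'|=\langle(\gamma_t')^{\mathring{}},\gamma_t'\rangle/|\gamma_t'|$. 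Evaluating at $t=0$, substituting $(\gamma')^{\mathring{}}=(\mathring\gamma)'=|\gamma'|\,\tfrac{d\mathring\gamma}{ds}$ (by (1) and the definition of $\tfrac{d}{ds}$) and $T=\gamma'/|\gamma'|$, this collapses to $(|\gamma'|)^{\mathring{}}=\langle\tfrac{d\mathring\gamma}{ds},T\rangle\,|\gamma'|$, i.e.\ $(ds)^{\mathring{}}=\langle\tfrac{d\mathring\gamma}{ds},T\rangle\,ds$.

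Finally, for (3) I would start from $\tfrac{dg}{ds}=g'/|\gamma'|$ and apply the quotient rule in $t$:
\[
\left(\tfrac{dg}{ds}\right)^{\mathring{}}=\frac{(g')^{\mathring{}}}{|\gamma'|}-\frac{g'}{|\gamma'|^{2}}\,(|\gamma'|)^{\mathring{}}.
\]
The first term equals $(\mathring g)'/|\gamma'|=\tfrac{d\mathring g}{ds}$ by (1); into the second I would insert the expression for $(|\gamma'|)^{\mathring{}}$ found in (2), turning it into $\tfrac{g'}{|\gamma'|}\langle\tfrac{d\mathring\gamma}{ds},T\rangle=\langle\tfrac{d\mathring\gamma}{ds},T\rangle\,\tfrac{dg}{ds}$, which gives the stated identity. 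There is no genuine obstacle here beyond bookkeeping; the one point requiring care is that both the operator $\tfrac{d}{ds}$ and the length element $ds$ depend implicitly on the varied curve $\gamma_t$ and not on $g_t$, so the $t$-derivative of $\tfrac{dg_t}{ds}$ must also differentiate $|\gamma_t'|$. Once this is made explicit the whole argument reduces to the product rule and equality of mixed partials.
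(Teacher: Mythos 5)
Your computation is correct: all three identities follow, as you say, from equality of mixed partials (Schwarz) plus the product/quotient rule applied to $ds=|\gamma'|\,dx$ and $\tfrac{dg}{ds}=g'/|\gamma'|$, and you correctly identify the one subtle point, namely that $\tfrac{d}{ds}$ depends on the varied curve $\gamma_t$. The paper itself gives no proof --- it cites the lemma from an external reference --- but your argument is the standard derivation and fills that gap faithfully.
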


\begin{theorem}
    Let \(\gamma\in C^{\infty}([a,b];\RR^n)\) and  \(\mathring\gamma\in C^{\infty}([a,b];\RR^n)\). Then,  
    \begin{align*}
        d\cL(\mathring\gamma) = \langle \mathring\gamma,T\rangle\big\vert_a^b - \int_a^b\langle \mathring\gamma, \tfrac{dT}{ds} \rangle\, ds.
    \end{align*}
\end{theorem}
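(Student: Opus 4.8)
The plan is to compute $d\cL(\mathring\gamma)$ directly from the definition $\cL(\gamma)=\int_a^b|\gamma'|=\int_a^b ds$ by differentiating under the integral sign and then applying \lemref{thm:HelpfulDerivativeRules}. First I would write $d\cL(\mathring\gamma)=\int_a^b (ds)^{\mathring{}}$, which is legitimate since the variation is smooth on the compact domain $[-\epsilon,\epsilon]\times[a,b]$ so differentiation and integration commute. By part (2) of \lemref{thm:HelpfulDerivativeRules}, $(ds)^{\mathring{}}=\langle\tfrac{d\mathring\gamma}{ds},T\rangle\,ds$, so
\begin{align*}
    d\cL(\mathring\gamma)=\int_a^b\langle\tfrac{d\mathring\gamma}{ds},T\rangle\,ds.
\end{align*}

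Next I would integrate by parts with respect to arc-length. Since $\tfrac{dg}{ds}\,ds = g'\,dx$ for any $g$, the product rule gives $\tfrac{d}{ds}\langle\mathring\gamma,T\rangle = \langle\tfrac{d\mathring\gamma}{ds},T\rangle + \langle\mathring\gamma,\tfrac{dT}{ds}\rangle$, and integrating this identity over $[a,b]$ against $ds$ yields
\begin{align*}
    \langle\mathring\gamma,T\rangle\big\vert_a^b = \int_a^b\langle\tfrac{d\mathring\gamma}{ds},T\rangle\,ds + \int_a^b\langle\mathring\gamma,\tfrac{dT}{ds}\rangle\,ds.
\end{align*}
Rearranging gives exactly the claimed formula $d\cL(\mathring\gamma)=\langle\mathring\gamma,T\rangle\big\vert_a^b-\int_a^b\langle\mathring\gamma,\tfrac{dT}{ds}\rangle\,ds$.

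I do not expect a serious obstacle here; the statement is essentially the classical first variation of arc-length, and all the technical ingredients — interchanging $d/dt$ with the integral, and the variation-of-arc-length identity — are already supplied. The only point requiring a little care is the integration by parts: one must justify that $\int_a^b \tfrac{d}{ds}\langle\mathring\gamma,T\rangle\,ds = \langle\mathring\gamma,T\rangle\big\vert_a^b$, i.e. that "$\tfrac{d}{ds}(\cdot)\,ds$" is a genuine exact differential whose integral is the boundary difference. This is immediate from $\tfrac{dg}{ds}\,ds=g'\,dx$ together with the fundamental theorem of calculus, and in fact could also be derived by combining parts (1) and (3) of \lemref{thm:HelpfulDerivativeRules} if one prefers to stay purely within the lemma's formalism. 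Note that no assumption of compact support on $\mathring\gamma$ is needed for this statement — the boundary term is retained — which is why it is stated for general $\mathring\gamma\in C^\infty([a,b];\RR^n)$.
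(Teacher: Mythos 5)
Your proposal is correct and follows essentially the same route as the paper's proof: apply part (2) of \lemref{thm:HelpfulDerivativeRules} to get \(d\cL(\mathring\gamma)=\int_a^b\langle\tfrac{d\mathring\gamma}{ds},T\rangle\,ds\), then integrate by parts via the product rule and the fundamental theorem of calculus. Your version merely spells out the justification of the boundary term a bit more explicitly than the paper does.
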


\begin{proof}
    By \lemref{thm:HelpfulDerivativeRules} and the fundamental theorem of calculus we have
    \begin{align*}
        d\cL(\mathring\gamma)=\int_a^b \langle \tfrac{d \mathring\gamma}{ds}, T \rangle\, ds = \int_a^b\left( \tfrac{d}{ds}\langle \mathring\gamma,T\rangle - \langle \mathring\gamma, \tfrac{dT}{ds}\rangle\right)ds = \langle \mathring\gamma,T\rangle\big\vert_a^b - \int_a^b\langle \mathring\gamma, \tfrac{dT}{ds} \rangle\, ds .
    \end{align*}
\end{proof}
We can also compute the variational gradient of the holonomy. 

\begin{theorem}\label{thm:VariationTotalTorsion}
    Let \(\gamma\in C^{\infty}([a,b];\RR^n)\) and \(\mathring\gamma\in C^{\infty}_0([a,b];\RR^n)\). Then, independent of the choice of \(W=(W_a,W_b)\) it holds that
    \begin{align}
        \label{eq:VariationTotalTorsion}
        d\cT_{W}(\mathring\gamma)=
        \int_a^b\langle\mathring\gamma, T\times(\tfrac{dT}{ds})'\rangle.
    \end{align}
\end{theorem}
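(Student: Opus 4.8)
The holonomy $\cT_W$ of \defref{def:TotalTorsion} is defined only for space curves, so I work with $n=3$. The role of the hypothesis $\mathring\gamma\in C^\infty_0([a,b];\RR^3)$ is that along any variation $t\mapsto\gamma_t$ realizing $\mathring\gamma$ the curves agree with $\gamma$, together with all their $x$-derivatives, on fixed neighbourhoods of $a$ and $b$; in particular the tangent directions at the endpoints, and hence the reference pair $W$, do not move with $t$, and the angle $\cT_W^{(t)}\in\RR/2\pi\ZZ$ depends smoothly on $t$ and therefore lifts to a smooth real-valued function, so $d\cT_W(\mathring\gamma)$ is well defined.

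The plan is to turn the computation into a one-dimensional integration. Let $Z_t$ be the parallel transport of $W_a$ along $\gamma_t$; combining the parallel condition $Z_t'=\lambda_t T_t$ with normality $\langle Z_t,T_t\rangle=0$ forces $\lambda_t=-\langle Z_t,T_t'\rangle$, so $Z_t$ solves the \emph{linear} initial value problem $Z_t'=-\langle Z_t,T_t'\rangle\,T_t$, $Z_t(a)=W_a$ (with $t\mapsto Z_t$ smooth by smooth dependence of linear ODEs on parameters), and moreover $|Z_t|\equiv1$, $\langle Z_t,T_t\rangle\equiv0$ for all $t$. First I would differentiate the defining relation of \defref{def:TotalTorsion} at $t=0$; using $T\times(T\times Z)=-Z$ one recognizes $-\sin\cT_W\,W_b+\cos\cT_W\,T(b)\times W_b=T(b)\times Z(b)$, and obtains
\[
  d\cT_W(\mathring\gamma)=\langle\mathring Z(b),\,T(b)\times Z(b)\rangle,\qquad \mathring Z\coloneqq\mathring Z_0 .
\]
Since $Z_t(a)=W_a$ is held fixed, $\mathring Z(a)=0$; hence, putting $f\coloneqq\langle\mathring Z,\,T\times Z\rangle$, the identity to prove becomes $d\cT_W(\mathring\gamma)=f(b)=\int_a^b f'\,dx$, and everything reduces to computing $f'$.

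For $f'$ I would differentiate the linear initial value problem in $t$ (commuting $(\cdot)'$ with $(\cdot)^{\mathring{}}$ by \lemref{thm:HelpfulDerivativeRules}(iii)) to express $(\mathring Z)'$ through $\mathring Z$, $T$, $T'$ and $\mathring T$, substitute this into $f'=\langle(\mathring Z)',T\times Z\rangle+\langle\mathring Z,(T\times Z)'\rangle$ together with $(T\times Z)'=T'\times Z$, and then eliminate $\mathring Z$ using $\langle\mathring Z,Z\rangle=0$ and $\langle\mathring Z,T\rangle=-\langle Z,\mathring T\rangle$ (obtained by differentiating $|Z_t|^2\equiv1$ and $\langle Z_t,T_t\rangle\equiv0$). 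Every term carrying $\mathring Z$ should then vanish or collapse, leaving the clean identity $f'=\langle\mathring T,\,T'\times T\rangle$. Eliminating the ``unknown'' $\mathring Z$ in favour of data depending only on $\gamma$ and $\mathring\gamma$ is the heart of the argument and the step I expect to be most error-prone; the cross-product bookkeeping is most transparent if one expands every vector in the orthonormal frame $(T,Z,T\times Z)$ and uses that $T'$ and $\mathring T$ both lie in the normal plane $\spanset\{Z,T\times Z\}$.

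It then remains to rewrite $\int_a^b f'\,dx$ in terms of $\mathring\gamma$. By \lemref{thm:HelpfulDerivativeRules}(iii), $\mathring T=\tfrac{d\mathring\gamma}{ds}-\langle\tfrac{d\mathring\gamma}{ds},T\rangle T$, and since $\langle T,T'\times T\rangle=0$ the tangential part drops out; clearing the factors of $|\gamma'|$ gives $f'=\langle\mathring\gamma',\,\tfrac{dT}{ds}\times T\rangle$. An integration by parts, whose boundary term vanishes because $\mathring\gamma(a)=\mathring\gamma(b)=0$, combined with $\tfrac{dT}{ds}\times T'=0$ and $(\tfrac{dT}{ds})'\times T=-T\times(\tfrac{dT}{ds})'$, turns this into $\int_a^b f'\,dx=\int_a^b\langle\mathring\gamma,\,T\times(\tfrac{dT}{ds})'\rangle$, which is \eqref{eq:VariationTotalTorsion}. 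Independence of the choice of $W$ is then automatic: the expression $f'=\langle\mathring T,\,T'\times T\rangle$ and the boundary condition $f(a)=0$ make no reference to $W$, so neither does $f(b)$.
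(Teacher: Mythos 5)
Your argument is correct, but it takes a genuinely different route from the paper's: the paper proves this theorem in a single line, by restricting the first-variation formula for the total torsion in \cite[Thm. 5.3]{PinkallGross23:DG} to variations in \(C^\infty_0\), whereas you rederive that formula from scratch. Your key steps all check out. Differentiating the defining relation of \defref{def:TotalTorsion} with \(W_b\), \(T(b)\) frozen (which is exactly what compact support buys) gives \(d\cT_W(\mathring\gamma)=\langle\mathring Z(b),T(b)\times Z(b)\rangle\) via \(T\times(T\times W_b)=-W_b\); and writing \(\mathring T=aZ+b\,T\times Z\), \(T'=\alpha Z+\beta\,T\times Z\) in the frame \((T,Z,T\times Z)\), the two surviving terms of \(f'\) are \(-\langle Z,T'\rangle\langle\mathring T,T\times Z\rangle=-\alpha b\) and \(\langle T',T\times Z\rangle\langle Z,\mathring T\rangle=\beta a\), so indeed \(f'=a\beta-b\alpha=\langle\mathring T,T'\times T\rangle\); the final integration by parts together with \(\tfrac{dT}{ds}\times T'=0\) then yields \eqref{eq:VariationTotalTorsion}, with the integral taken against \(dx\) as in the statement. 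What your approach buys is a self-contained proof and a transparent reason for the independence of \(W\) (the integrand never mentions it); what it costs is the cross-product bookkeeping that the citation hides. Two cosmetic points: the commutation \((\mathring Z)'=(Z')^{\mathring{}}\) is item (1) of \lemref{thm:HelpfulDerivativeRules}, not (iii) (which you correctly use only for \(\mathring T\)); and your restriction to \(n=3\) is forced by \defref{def:TotalTorsion} itself, so it matches the paper's actual scope even though the theorem is stated for \(\RR^n\).
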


\begin{proof}
    This follows from restricting \cite[Thm. 5.3]{PinkallGross23:DG} to variations in \(C^{\infty}_0([a,b];\RR^n)\).
\end{proof}

\section{Euler-Lagrange Equations}\label{sec:ELEqs}
With all necessary preliminaries in place, we start this section with computing the variational formula of the bending energy with bending stiffness \eqref{eq:BendingEnergyVariableStiffness} for a curve \(\gamma\in C^{\infty}([a,b];\RR^n)\) with bending stiffness  \(\varrho\in C^{\infty}([a,b];\RR_{>0})\) under a general variation \(\mathring\gamma\in C^{\infty}([a,b];\RR^n)\).
\begin{theorem}
[Variational formula for the bending energy]\label{thm:VaritionalGradientBendingEnergy}
Let \(\gamma\in C^{\infty}([a,b];\RR^n)\) with bending stiffness  \(\varrho\in C^{\infty}([a,b];\RR_{>0})\). Then, the variation of the bending energy with bending stiffness \eqref{eq:BendingEnergyVariableStiffness} with respect to \(\mathring\gamma\in C^{\infty}([a,b];\RR^n)\) is given by
\begin{align}
    \label{eq:VariationalGradientBendingEnergyFull}
    d\cB_\varrho(\mathring\gamma)=
    \big[ 
        \varrho \langle \tfrac{d\mathring\gamma}{ds} , \tfrac{dT}{ds} \rangle -
        \langle\mathring\gamma , 
            \varrho \tfrac{d^2T}{ds^2} +
            \tfrac{3}{2} \varrho \langle \tfrac{dT}{ds} , \tfrac{dT}{ds} \rangle T
            + \tfrac{d\varrho}{ds} \tfrac{dT}{ds}
        \rangle
    \big]_a^b +\int_a^b\langle\mathring\gamma, \cG^\cB\rangle\,ds,
    \end{align}
    where
    \begin{align}\label{eq:FreeElasticGeneralForT}
       \cG^\cB \coloneqq      \varrho \tfrac{d^3T}{ds^3} + 3 \varrho \big\langle \tfrac{d^2T}{ds^2} , \tfrac{dT}{ds} \big\rangle T + \tfrac{3}{2} \varrho \big\langle \tfrac{dT}{ds} , \tfrac{dT}{ds} \big\rangle \tfrac{dT}{ds}
            +
            \tfrac{d^2\varrho}{ds^2} \tfrac{dT}{ds}
            + \tfrac{d\varrho}{ds}
                ( 
                2 \tfrac{d^2T}{ds^2}
                + \tfrac{3}{2} \big\langle \tfrac{dT}{ds} , \tfrac{dT}{ds} \big\rangle T 
                ).
\end{align}
\end{theorem}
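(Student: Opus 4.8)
The plan is to differentiate $\cB_\varrho$ under the integral sign and then integrate by parts twice, using only \lemref{thm:HelpfulDerivativeRules}, the orthogonality $\langle T,\tfrac{dT}{ds}\rangle=0$, and integration by parts with respect to arc-length. The key preliminary observation is that the bending stiffness $\varrho$ is a prescribed function of the parameter $x\in[a,b]$ and is therefore unaffected by a variation of $\gamma$, i.e. $\mathring\varrho=0$; only $\tfrac{d\varrho}{ds}=\varrho'/\abs{\gamma'}$ varies, through $\abs{\gamma'}$. Writing $\kappa\coloneqq\tfrac{dT}{ds}$ and using \lemref{thm:HelpfulDerivativeRules}(2) for $(ds)^{\mathring{}}$, the product rule gives
\begin{align*}
d\cB_\varrho(\mathring\gamma)=\int_a^b\varrho\,\big\langle\big(\tfrac{dT}{ds}\big)^{\mathring{}},\kappa\big\rangle\,ds+\tfrac12\int_a^b\varrho\,\langle\kappa,\kappa\rangle\,\big\langle\tfrac{d\mathring\gamma}{ds},T\big\rangle\,ds .
\end{align*}

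Next I would apply \lemref{thm:HelpfulDerivativeRules}(3) twice: with $g=\gamma$ to get $\mathring T=\tfrac{d\mathring\gamma}{ds}-\langle\tfrac{d\mathring\gamma}{ds},T\rangle T$, and with $g=T$ to get $(\tfrac{dT}{ds})^{\mathring{}}=\tfrac{d\mathring T}{ds}-\langle\tfrac{d\mathring\gamma}{ds},T\rangle\,\kappa$. Substituting and using $\langle T,\kappa\rangle=0$, the terms proportional to $\langle\tfrac{d\mathring\gamma}{ds},T\rangle\langle\kappa,\kappa\rangle$ combine to coefficient $-\tfrac12$, leaving
\begin{align*}
d\cB_\varrho(\mathring\gamma)=\int_a^b\varrho\,\big\langle\tfrac{d\mathring T}{ds},\kappa\big\rangle\,ds-\tfrac12\int_a^b\varrho\,\langle\kappa,\kappa\rangle\,\big\langle\tfrac{d\mathring\gamma}{ds},T\big\rangle\,ds .
\end{align*}
Then I would integrate by parts in the first integral, moving $\tfrac{d}{ds}$ off $\mathring T$ onto $\varrho\kappa$; this is legitimate since $\int_a^b\tfrac{df}{ds}\,ds=f\big\vert_a^b$ and the Leibniz rule holds for $\tfrac{d}{ds}$. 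Re-expanding $\mathring T$ and using $\langle T,\tfrac{d}{ds}(\varrho\kappa)\rangle=-\varrho\langle\kappa,\kappa\rangle$ (differentiate $\langle T,\varrho\kappa\rangle=0$) produces the boundary term $[\varrho\langle\mathring T,\kappa\rangle]_a^b=[\varrho\langle\tfrac{d\mathring\gamma}{ds},\tfrac{dT}{ds}\rangle]_a^b$ together with one more copy of $-\int_a^b\varrho\langle\kappa,\kappa\rangle\langle\tfrac{d\mathring\gamma}{ds},T\rangle\,ds$, which upgrades the coefficient from $-\tfrac12$ to $-\tfrac32$. Collecting terms,
\begin{align*}
d\cB_\varrho(\mathring\gamma)=\Big[\varrho\big\langle\tfrac{d\mathring\gamma}{ds},\tfrac{dT}{ds}\big\rangle\Big]_a^b-\int_a^b\big\langle\tfrac{d\mathring\gamma}{ds},V\big\rangle\,ds,\qquad V\coloneqq\tfrac{d}{ds}\big(\varrho\tfrac{dT}{ds}\big)+\tfrac32\varrho\,\langle\tfrac{dT}{ds},\tfrac{dT}{ds}\rangle\,T .
\end{align*}

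A last integration by parts in the $V$-integral, moving $\tfrac{d}{ds}$ off $\mathring\gamma$, yields $d\cB_\varrho(\mathring\gamma)=\big[\varrho\langle\tfrac{d\mathring\gamma}{ds},\tfrac{dT}{ds}\rangle-\langle\mathring\gamma,V\rangle\big]_a^b+\int_a^b\langle\mathring\gamma,\tfrac{dV}{ds}\rangle\,ds$. Expanding $V=\tfrac{d\varrho}{ds}\tfrac{dT}{ds}+\varrho\tfrac{d^2T}{ds^2}+\tfrac32\varrho\langle\tfrac{dT}{ds},\tfrac{dT}{ds}\rangle T$ reproduces the boundary expression in the statement verbatim, and differentiating $V$ once more — using the product rule and $\tfrac{d}{ds}\langle\tfrac{dT}{ds},\tfrac{dT}{ds}\rangle=2\langle\tfrac{d^2T}{ds^2},\tfrac{dT}{ds}\rangle$, which is exactly what creates the $2\tfrac{d\varrho}{ds}\tfrac{d^2T}{ds^2}$ and $\tfrac32\tfrac{d\varrho}{ds}\langle\tfrac{dT}{ds},\tfrac{dT}{ds}\rangle T$ cross terms — gives $\cG^\cB=\tfrac{dV}{ds}$ as in \eqref{eq:FreeElasticGeneralForT}. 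Note that compact support of the variation is not used anywhere, so the formula holds for arbitrary $\mathring\gamma\in C^\infty([a,b];\RR^n)$. I expect the only real difficulty to be bookkeeping: correctly tracking the several terms proportional to $\langle\tfrac{d\mathring\gamma}{ds},T\rangle\langle\tfrac{dT}{ds},\tfrac{dT}{ds}\rangle$ across the two substitutions and the integrations by parts so that the numerical coefficients $\tfrac32$, $3$ and $2$ appearing in $\cG^\cB$ and in the boundary term come out right.
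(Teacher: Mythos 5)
Your proposal is correct and follows essentially the same route as the paper: differentiate under the integral, apply \lemref{thm:HelpfulDerivativeRules} to express \((\tfrac{dT}{ds})^{\mathring{}}\) and \(\mathring T\), and integrate by parts twice, with all coefficients (\(\tfrac32\), \(3\), \(2\)) and boundary terms coming out as in \eqref{eq:VariationalGradientBendingEnergyFull}--\eqref{eq:FreeElasticGeneralForT}. The only difference is organizational — you package the once-integrated expression as \(V\) and obtain \(\cG^\cB=\tfrac{dV}{ds}\), whereas the paper expands \(\tfrac{d\mathring T}{ds}\) to \(\tfrac{d^2\mathring\gamma}{ds^2}\) first and splits the integral in two — which does not change the substance of the argument.
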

\begin{proof}
First note that \(\langle\tfrac{dT}{ds},T\rangle=0\) and \lemref{thm:HelpfulDerivativeRules} \ref{eq:Lem2-3partiii} applied to \(g=\gamma\) gives \(\mathring T = \tfrac{d\mathring\gamma}{ds} - \langle \tfrac{d\mathring\gamma}{ds}, T\rangle T\). Therefore, the variation of Bending Energy with bending stiffness is 
\begin{align*}
    d\cB_\varrho(\mathring\gamma)
        &= \tfrac{1}{2}\int_a^b \left(\varrho\langle \tfrac{dT}{ds},\tfrac{dT}{ds}\rangle\,ds\right)^\circ \\
        &= \int_a^b \varrho ( \langle (\tfrac{dT}{ds})^\circ,\tfrac{dT}{ds}\rangle\, ds + \tfrac{1}{2}\langle \tfrac{dT}{ds},\tfrac{dT}{ds}\rangle\,d\mathring{s})\\
        &= \int_a^b \varrho ( \langle \tfrac{d\mathring{T}}{ds} - \langle \tfrac{d\mathring{\gamma}}{ds}, T\rangle \tfrac{dT}{ds},\tfrac{dT}{ds}\rangle\, ds + \tfrac{1}{2}\langle \tfrac{d\mathring\gamma}{ds},T\rangle \langle \tfrac{dT}{ds},\tfrac{dT}{ds}\rangle \,ds)\\
        &= \int_a^b \varrho ( \langle \tfrac{d\mathring{T}}{ds},\tfrac{dT}{ds}\rangle - \tfrac{1}{2}\langle \tfrac{d\mathring\gamma}{ds},T\rangle \langle \tfrac{dT}{ds},\tfrac{dT}{ds}\rangle ) \,ds\\
        &= \int_a^b \varrho ( \langle \tfrac{d}{ds}( \tfrac{d\mathring\gamma}{ds}- \langle \tfrac{d\mathring\gamma}{ds}, T\rangle T ),\tfrac{dT}{ds}\rangle - \tfrac{1}{2} \langle \tfrac{d\mathring\gamma}{ds},T\rangle \langle \tfrac{dT}{ds},\tfrac{dT}{ds}\rangle) \,ds\\
        &= \int_a^b \varrho ( \langle \tfrac{d^2\mathring\gamma}{ds^2},\tfrac{dT}{ds}\rangle - \tfrac{3}{2}\langle \tfrac{d\mathring\gamma}{ds},T\rangle \langle \tfrac{dT}{ds},\tfrac{dT}{ds}\rangle) \,ds\\
\end{align*}
We split the integral just to keep computations more clear. Using integration by parts we obtain
\begin{align*}
    \int_a^b \varrho
    \langle \tfrac{d^2 \mathring\gamma}{ds^2} , \tfrac{dT}{ds} \rangle
    ds &= 
    \int_a^b \tfrac{d}{ds}
    (
        \varrho
        \langle \tfrac{d\mathring\gamma}{ds} , \tfrac{dT}{ds} \rangle
    )
    - \tfrac{d\varrho}{ds}
    \langle \tfrac{d\mathring\gamma}{ds} , \tfrac{dT}{ds} \rangle
    - \varrho
    \langle \tfrac{d\mathring\gamma}{ds} , \tfrac{d^2T}{ds^2} \rangle
    \,ds
    \\
    &=
    \big[ \varrho \langle \tfrac{d\mathring\gamma}{ds} , \tfrac{dT}{ds} \rangle\big]_a^b
    + \int_a^b 
    - \tfrac{d}{ds}
    (
        \tfrac{d\varrho}{ds}
        \langle \mathring\gamma , \tfrac{dT}{ds} \rangle
    )
    + \tfrac{d^2\varrho}{ds^2} \langle \mathring\gamma , \tfrac{dT}{ds} \rangle
    + \tfrac{d\varrho}{ds} \langle \mathring\gamma , \tfrac{d^2T}{ds^2} \rangle
    \\
    &\quad 
    - \tfrac{d}{ds}
    (
        \varrho \langle \mathring\gamma , \tfrac{d^2T}{ds^2} \rangle
    )
    + \tfrac{d\varrho}{ds} \langle \mathring\gamma , \tfrac{d^2T}{ds^2} \rangle
    + \varrho \langle \mathring\gamma , \tfrac{d^3T}{ds^3} \rangle 
    \,ds\\
    &=
    \big[
        \varrho \langle \tfrac{d\mathring\gamma}{ds} , \tfrac{dT}{ds} \rangle\
        -
        \tfrac{d\varrho}{ds} \langle \mathring\gamma , \tfrac{dT}{ds} \rangle
        -
        \varrho \langle \mathring\gamma , \tfrac{d^2T}{ds^2} \rangle
    \big]_a^b
    \\
    &\quad
    + \int_a^b
    \tfrac{d^2\varrho}{ds^2} \langle \mathring\gamma , \tfrac{dT}{ds} \rangle
    + \tfrac{d\varrho}{ds} \langle \mathring\gamma , \tfrac{d^2T}{ds^2} \rangle
    + \tfrac{d\varrho}{ds} \langle \mathring\gamma , \tfrac{d^2T}{ds^2} \rangle
    + \varrho \langle \mathring\gamma , \tfrac{d^3T}{ds^3} \rangle
    \,ds
    \\
    &=
    \big[
        \varrho \langle \tfrac{d\mathring\gamma}{ds} , \tfrac{dT}{ds} \rangle\
        -
        \langle
            \mathring\gamma , \varrho \tfrac{d^2T}{ds^2} + \tfrac{d\varrho}{ds} \tfrac{dT}{ds}
        \rangle
    \big]_a^b
    \\
    &\quad
    + \int_a^b
    \langle
        \mathring\gamma ,
        \varrho \tfrac{d^3T}{ds^3}
        + 2 \tfrac{d\varrho}{ds} \tfrac{d^2T}{ds^2}
        + \tfrac{d^2\varrho}{ds^2} \tfrac{dT}{ds}
    \rangle
    \,ds.
\end{align*}
Similarly, for the second integral we get
\begin{align*}
    \int_a^b \tfrac{3}{2} \varrho
    \langle \tfrac{d\mathring\gamma}{ds} , T \rangle
    \langle \tfrac{dT}{ds} , \tfrac{dT}{ds} \rangle
    ds &=
    \int_a^b \tfrac{d}{ds}
    ( 
        \tfrac{3}{2} \varrho\langle\mathring\gamma , T \rangle
        \langle \tfrac{dT}{ds} , \tfrac{dT}{ds} \rangle
    )
    \\
    &\quad - \tfrac{3}{2} \varrho \langle \mathring\gamma , \tfrac{dT}{ds} \rangle
    \langle \tfrac{dT}{ds} , \tfrac{dT}{ds} \rangle
    - 3 \varrho \langle \mathring\gamma , T \rangle
    \langle \tfrac{d^2T}{ds^2} , \tfrac{dT}{ds} \rangle
    \\
    &\quad - \tfrac{3}{2} \tfrac{d\varrho}{ds} \langle \mathring\gamma , T \rangle
    \langle \tfrac{dT}{ds} , \tfrac{dT}{ds} \rangle
    \,ds
    \\
    &=
    \big[
        \tfrac{3}{2} \varrho \langle \mathring\gamma , T \rangle \langle \tfrac{dT}{ds} , \tfrac{dT}{ds} \rangle
    \big]_a^b
    \\
    &\quad 
    - \int_a^b
    \langle
        \mathring\gamma ,
        \tfrac{3}{2} \varrho \langle\tfrac{dT}{ds},\tfrac{dT}{ds}\rangle \tfrac{dT}{ds}
        + 3 \varrho \langle \tfrac{d^2T}{ds^2} \tfrac{dT}{ds} \rangle T
        + \tfrac{3}{2} \tfrac{d\varrho}{ds} \langle\tfrac{dT}{ds},\tfrac{dT}{ds}\rangle T
    \rangle
    \,ds.
\end{align*}
Adding the two results yields the claim.
\end{proof}

\subsection{Free Elastic Curves with Bending Stiffness}
\thmref{thm:VaritionalGradientBendingEnergy} bears several significant implications. When we imagine an (initially perfectly straight) elastic wire, it naturally wants to minimizes its bending energy. Holding a piece of such wire in our hands, we fix its end-points (and in fact even its tangent directions at the end points). Therefore, from a physical point of view, it is reasonable to restrict our attention to variations of \eqref{eq:BendingEnergyVariableStiffness} with compact support in the interior of \([a,b]\).

Already in classical theory, free elastic curves\footnote{\ie, unconstrained stationary points of the bending energy} take on a special role because, up to scaling and positioning, there is only one such curve. It turns out that even in our generalized setup, our results in \secref{sec:TheCurvatureOfElasticCurves} imply that the constant stiffness ones remain the only solution for curves that are not straight line segments. 
\begin{theorem}
   Free elastic curves are either straight line segments or have constant bending stiffness.
\end{theorem}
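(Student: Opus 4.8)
The plan is to reduce the statement to a uniqueness theorem for ordinary differential equations. A free elastic curve is an unconstrained stationary point of \(\cB_\varrho\) under variations \(\mathring\gamma\in C_0^\infty([a,b];\RR^n)\); for such variations the boundary bracket in \thmref{thm:VaritionalGradientBendingEnergy} vanishes, so by the fundamental lemma of the calculus of variations \(\gamma\) is a free elastic curve if and only if the Euler--Lagrange equation \(\cG^\cB=0\) holds on \([a,b]\), with \(\cG^\cB\) as in \eqref{eq:FreeElasticGeneralForT}.

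The first step is to split this vector equation into its tangential and normal parts, which is exactly the curvature reformulation of \secref{sec:TheCurvatureOfElasticCurves}. Testing \(\cG^\cB=0\) against \(T\) is quick: from \(\langle\tfrac{dT}{ds},T\rangle=0\) one gets \(\langle\tfrac{d^2T}{ds^2},T\rangle=-\langle\tfrac{dT}{ds},\tfrac{dT}{ds}\rangle\) and \(\langle\tfrac{d^3T}{ds^3},T\rangle=-3\langle\tfrac{d^2T}{ds^2},\tfrac{dT}{ds}\rangle\), the terms proportional to \(\varrho\) cancel, and what remains is
\[
    \varrho'\,\langle\tfrac{dT}{ds},\tfrac{dT}{ds}\rangle=0\qquad\text{on }[a,b].
\]
Working in a parallel normal frame and writing \(\kappa\in\RR^{n-1}\) for the curvature (so that \(\tfrac{dT}{ds}\) corresponds to \(\kappa\) and \(|\kappa|^2=\langle\tfrac{dT}{ds},\tfrac{dT}{ds}\rangle\)), the normal part of \(\cG^\cB=0\) collects into
\[
    (\varrho\kappa)''+\tfrac12\,\varrho\,|\kappa|^2\,\kappa=0,
\]
the \(\varrho\)-weighted analogue of the classical free-elastica curvature equation; since this is carried out in \secref{sec:TheCurvatureOfElasticCurves}, I would simply quote it.

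With these two equations the dichotomy follows. Assume \(\gamma\) is not a straight line segment, i.e. \(\kappa\not\equiv0\), and suppose toward a contradiction that \(\varrho\) is not constant, so \(\varrho'(s_0)\neq0\) for some \(s_0\). By continuity \(\varrho'\) is nonzero on a neighbourhood of \(s_0\), hence the first equation forces \(\kappa\equiv0\) there; thus \(u\coloneqq\varrho\kappa\) vanishes identically near \(s_0\), in particular \(u(s_0)=0\) and \(u'(s_0)=0\). The second equation reads \(u''=-\tfrac12\,\varrho^{-2}\,|u|^2\,u\), a second-order ODE on the connected interval \([a,b]\) whose right-hand side is smooth in \(u\) (hence locally Lipschitz) with smooth coefficients because \(\varrho>0\). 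Since \(u\equiv0\) is a solution with the same value and first derivative at \(s_0\), Picard--Lindel\"of uniqueness, extended from \(s_0\) across \([a,b]\), gives \(u\equiv0\) everywhere, whence \(\kappa\equiv0\) --- contradicting the assumption. Therefore \(\varrho\) is constant, which is the claimed dichotomy.

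The one genuinely substantive step is the first: establishing that \(\cG^\cB=0\) decouples into the tangential relation \(\varrho'|\kappa|^2=0\) and a normal equation of the precise shape \((\varrho\kappa)''=(\text{a locally Lipschitz function of }\varrho\kappa\text{ with coefficients built from }\varrho)\). The tangential relation drops out of \(|T|\equiv1\) with almost no work; the normal one is the reorganization done in \secref{sec:TheCurvatureOfElasticCurves}. After that, the argument is just the ODE uniqueness theorem, and the only point to watch is that one must not appeal to analyticity: the content is precisely that vanishing of \(u\) and \(u'\) at a single point of a Lipschitz ODE already propagates to the whole interval.
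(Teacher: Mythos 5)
Your proof is correct and follows the same route as the paper: decompose \(\cG^\cB=0\) in a parallel normal frame into the tangential relation \(\varrho'\langle\kappa,\kappa\rangle=0\) and the normal equation \((\varrho\kappa)''+\tfrac12\varrho\langle\kappa,\kappa\rangle\kappa=0\), exactly as in \secref{sec:TheCurvatureOfElasticCurves}. The one thing you add is the Picard--Lindel\"of propagation step for \(u=\varrho\kappa\), and this is a genuine improvement: the paper concludes directly from \(\varrho'\langle\kappa,\kappa\rangle=0\) that a non-straight curve has constant \(\varrho\), which only yields \(\varrho'=0\) on the open set where \(\kappa\neq0\) and silently ignores the possibility that \(\kappa\) vanishes on a nontrivial subinterval where \(\varrho'\) could be nonzero. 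Your uniqueness argument rules out precisely that scenario (if \(u\) and \(u'\) vanish at one point they vanish on all of \([a,b]\)), so your write-up closes a small gap in the paper's reasoning rather than merely reproducing it.
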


\subsection{Elastic Curves with Bending Stiffness}
Our investigations are inspired by deformations of physical rods or cables, possibly with non-uniform thickness distribution. We assume for the thickness to be prescribed along the arc length of the cable. 
\begin{figure}[h]
   \centering
   \includegraphics[width=.9\columnwidth]{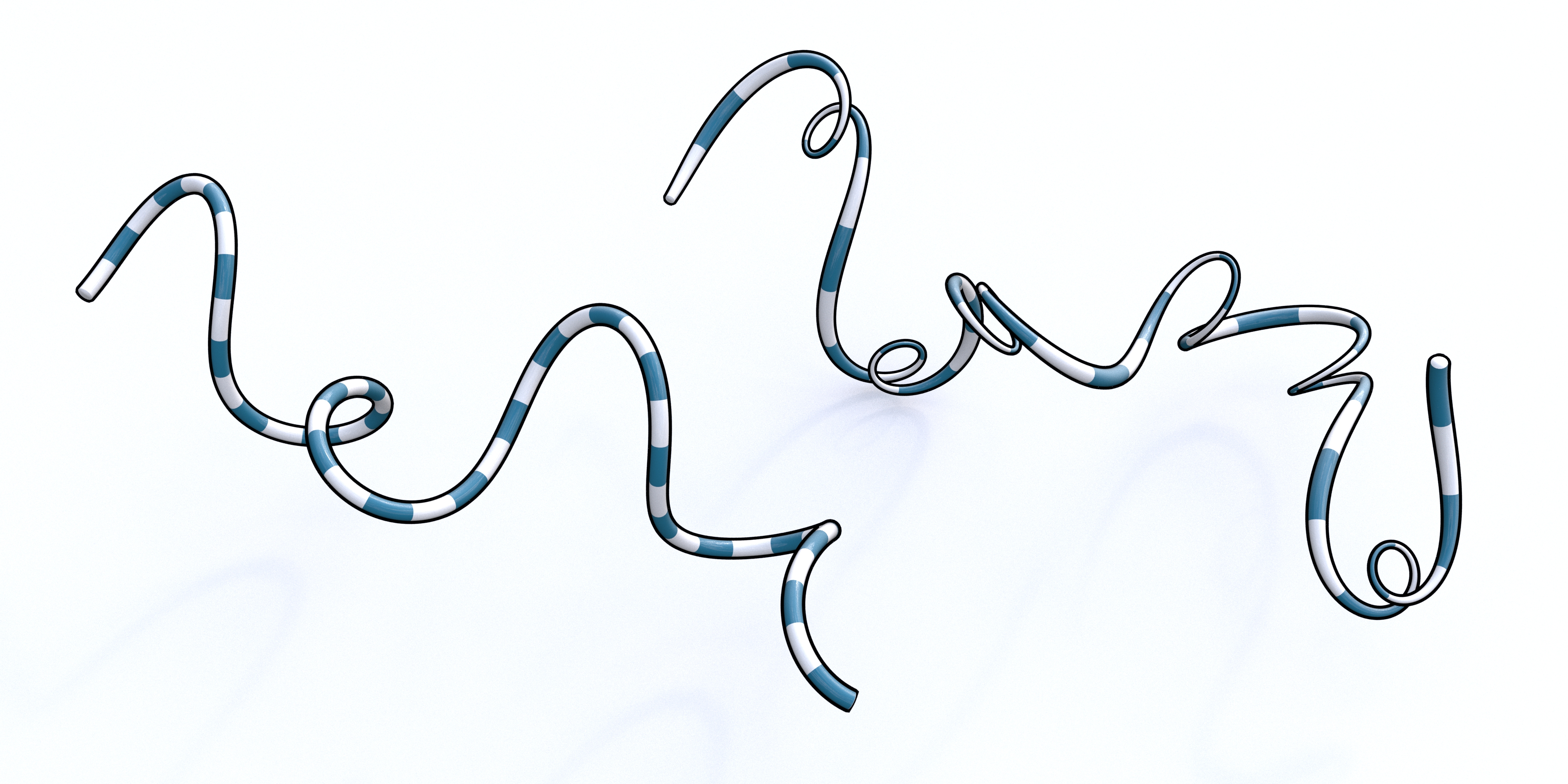}
   \caption{\label{fig:curve1_cst_and_sin}A comparison between an elastic curve with constant bending stiffness \(\varrho\) obtained from integrating \teqref{eq:PendulumEqDefiningEq} (left) and a corresponding curve obtained from the same initial conditions, but with a modulated bending stiffness \(\varrho\) (right).
   }
\end{figure}
To incorporate this assumption into our model we constrain the set of admissible variations to the arc-length parameterized curves. By standard means of calculus of variations, this can be achieved by introducing a suitable Lagrange multiplier~\cite{Singer2008Lectures, zeidler2012applied}. More specifically,
a curve \(\gamma\in C^{\infty}([a,b];\RR^n)\) with bending stiffness \(\varrho\in C^{\infty}([a,b];\RR_{>0})\) is a stationary point of \eqref{eq:BendingEnergyVariableStiffness} subject to the pointwise constraint 
\[\langle \gamma ' , \gamma ' \rangle - 1 = 0 \]
if and only if there exists a function \(\Lambda \in C^{\infty}([a,b];\RR)\) such that \(\gamma\) is a stationary point of 
\begin{align}
    \label{eq:BendingEnergyVariableStiffnessUnitSpeedConstraint}
    \cB^\Lambda_\varrho(\gamma)\coloneqq\int_a^b\tfrac{1}{2}\varrho\langle T',T' \rangle + \Lambda(\langle \gamma ' , \gamma ' \rangle - 1) .
\end{align}

\begin{remark}
Note that the arc-length constraint is in fact appropriate when dealing with non-uniform bending stiffness. Unlike as for a balloon animal, to mimic the behavior of a physical cable, the thickness shall not be redistributed along the curve. A sole constraint on the length does not rule out such scenarios, though clearly the arc-length constraint also automatically constraints the length of the curve \(\gamma\).
\end{remark}

\begin{definition}[Elastic curve with bending stiffness]
    An arc-length par\-ametrized curve \(\gamma\in C^{\infty}([a,b];\RR^n)\) with bending stiffness \(\varrho\in C^{\infty}([a,b];\RR_{>0})\) is \emph{elastic with bending stiffness} if it is a critical point of the energy in \teqref{eq:BendingEnergyVariableStiffness} under all variations \(\mathring\gamma\in C^\infty_0([a,b];\RR^n)\) constraining the arc-length of the curve.
\end{definition}

\begin{theorem} \label{thm:TorsionFreeElasticsODE}
A curve \(\gamma\in C^{\infty}([a,b];\RR^n)\) with bending stiffness \(\varrho\in C^{\infty}([a,b];\RR_{>0})\) is an elastic curve if and only if there is a \(\Lambda \in C^{\infty}([a,b];\RR)\) such that \[\cG^\cB_\Lambda \coloneqq \cG^\cB - 2 \Lambda ' T - 2 \Lambda T' = 0.\] In terms of the unit tangent field \(T\) this can be expressed by
\begin{align*}
    \varrho T'''
    + 3 \varrho \langle T' , T'' \rangle T
    + \tfrac{3}{2} \varrho \langle T' , T' \rangle T'
    + \varrho'' T'
    + 2 \varrho' T''
    + \tfrac{3}{2} \varrho' \langle T' , T' \rangle T
    - \Lambda ' T
    - \Lambda T'
    = 0, 
\end{align*}
or equivalently, in terms of \(\gamma\),
\begin{equation*}
    \varrho \gamma''''
    +3  \varrho \langle \gamma''' , \gamma'' \rangle \gamma'
    + \tfrac{3}{2} \varrho \langle \gamma'' , \gamma'' \rangle \gamma''
    + \varrho'' \gamma''
    + 2 \varrho' \gamma'''
    + \tfrac{3}{2} \varrho' \langle \gamma'' , \gamma'' \rangle \gamma'
    - \Lambda' \gamma'
    - \Lambda \gamma''
    = 0.
\end{equation*}
\end{theorem}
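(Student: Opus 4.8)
The plan is to recognize the claimed equation as the Euler--Lagrange equation of the Lagrangian functional $\cB^\Lambda_\varrho$ from \eqref{eq:BendingEnergyVariableStiffnessUnitSpeedConstraint}. As recalled just above the definition, the standard Lagrange-multiplier argument for the pointwise constraint $\langle\gamma',\gamma'\rangle-1=0$ says that $\gamma$ is a critical point of $\cB_\varrho$ among variations $\mathring\gamma\in C^\infty_0([a,b];\RR^n)$ that preserve the arc-length if and only if there is a multiplier $\Lambda\in C^\infty([a,b];\RR)$ with $d\cB^\Lambda_\varrho(\mathring\gamma)=0$ for every $\mathring\gamma\in C^\infty_0([a,b];\RR^n)$. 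So I would compute $d\cB^\Lambda_\varrho(\mathring\gamma)=d\cB_\varrho(\mathring\gamma)+d\big(\int_a^b\Lambda(\langle\gamma',\gamma'\rangle-1)\big)(\mathring\gamma)$, bring it into the form $\int_a^b\langle\mathring\gamma,(\,\cdot\,)\rangle\,ds$, and apply the fundamental lemma of the calculus of variations. For the first summand, \thmref{thm:VaritionalGradientBendingEnergy} gives $\int_a^b\langle\mathring\gamma,\cG^\cB\rangle\,ds$ plus a boundary bracket that vanishes here, because $\mathring\gamma\in C^\infty_0([a,b];\RR^n)$ forces $\mathring\gamma$, and hence $\tfrac{d\mathring\gamma}{ds}$, to vanish on collars of $a$ and $b$.

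For the constraint term, $\tfrac{d}{dt}\big|_0\langle\gamma_t',\gamma_t'\rangle=2\langle\gamma',(\mathring\gamma)'\rangle$ by \lemref{thm:HelpfulDerivativeRules}, so its variation equals $\int_a^b 2\Lambda\langle\gamma',(\mathring\gamma)'\rangle$; one integration by parts, with vanishing boundary term by compact support, rewrites this as $-\int_a^b\langle\mathring\gamma,2(\Lambda\gamma')'\rangle=-\int_a^b\langle\mathring\gamma,2\Lambda'\gamma'+2\Lambda\gamma''\rangle$. Now one uses that $\gamma$ is parametrized by arc-length: $|\gamma'|\equiv1$, hence $\gamma'=T$, $\gamma''=T'$, and the parameter measure coincides with $ds$; the constraint thus contributes $-\int_a^b\langle\mathring\gamma,2\Lambda'T+2\Lambda T'\rangle\,ds$. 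Adding the two pieces,
\[
d\cB^\Lambda_\varrho(\mathring\gamma)=\int_a^b\langle\mathring\gamma,\cG^\cB-2\Lambda'T-2\Lambda T'\rangle\,ds=\int_a^b\langle\mathring\gamma,\cG^\cB_\Lambda\rangle\,ds ,
\]
and the fundamental lemma forces $\cG^\cB_\Lambda\equiv0$. Conversely, if some $\Lambda$ makes $\cG^\cB_\Lambda\equiv0$ then $d\cB^\Lambda_\varrho(\mathring\gamma)=0$ for all $\mathring\gamma\in C^\infty_0$, and along any arc-length-preserving variation the constraint integral is identically $0$, so $d\cB_\varrho(\mathring\gamma)=d\cB^\Lambda_\varrho(\mathring\gamma)=0$, i.e.\ $\gamma$ is elastic. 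Finally, substituting the explicit $\cG^\cB$ from \eqref{eq:FreeElasticGeneralForT} and replacing each $\tfrac{d}{ds}$ by an ordinary derivative (valid since $|\gamma'|\equiv1$) yields the displayed equation in $T$ — up to the innocuous rescaling of $\Lambda$ by a factor $2$ — and the substitutions $T=\gamma'$, $T'=\gamma''$, $T''=\gamma'''$, $T'''=\gamma''''$ turn it into the equivalent equation in $\gamma$.

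I expect no serious obstacle here: once \thmref{thm:VaritionalGradientBendingEnergy} is available the computation is essentially bookkeeping. The only points requiring care are (i) the reduction to the Lagrangian $\cB^\Lambda_\varrho$, i.e.\ the infinite-dimensional Lagrange-multiplier rule for the pointwise constraint $\langle\gamma',\gamma'\rangle=1$ (the standard argument referenced in the text), and (ii) verifying that all boundary brackets really drop, which holds precisely because compact support in the interior annihilates $\mathring\gamma$ together with its derivatives near $a$ and $b$; beyond that, everything is the identification $ds\leftrightarrow dx$, $T\leftrightarrow\gamma'$ legitimate at an arc-length-parametrized curve.
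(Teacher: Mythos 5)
Your proposal is correct and follows essentially the same route as the paper: invoke the Lagrange-multiplier reformulation via $\cB^\Lambda_\varrho$, use \thmref{thm:VaritionalGradientBendingEnergy} (with boundary brackets killed by compact support) for the bending term, integrate the constraint term by parts to produce $-2\Lambda'T-2\Lambda T'$, and conclude with the fundamental lemma. The only cosmetic difference is that you explicitly flag the factor-$2$ rescaling of $\Lambda$ between the two displayed forms, which the paper leaves implicit.
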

\begin{figure}[h]
   \centering
   \includegraphics[width=.9\columnwidth]{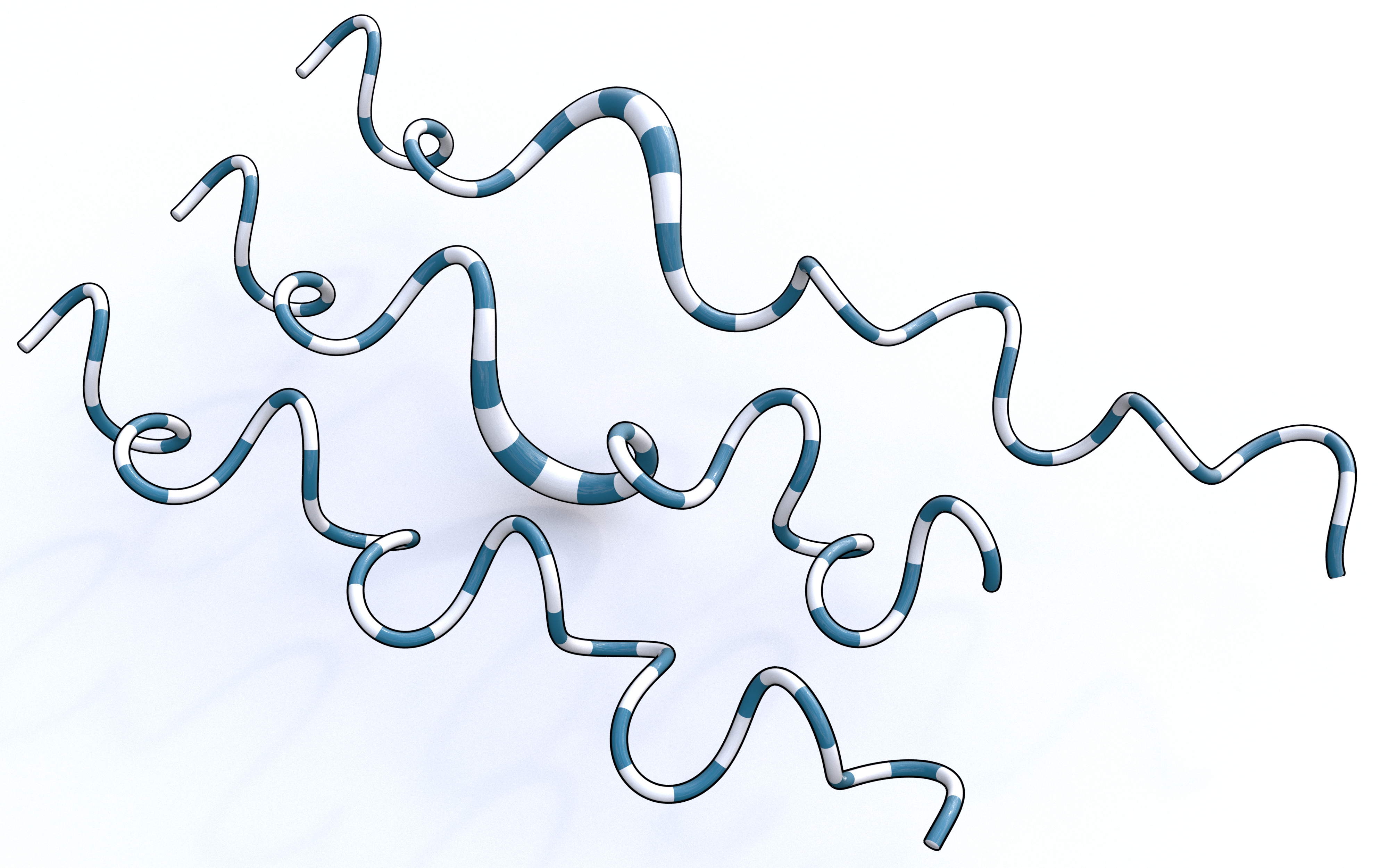}
   \caption{\label{fig:curve1_agnesi_triple.}
   A comparison between an elastic curve with constant bending stiffness \(\varrho\) obtained from integrating \teqref{eq:PendulumEqDefiningEq} (front) and corresponding curves obtained from the same initial conditions, but with a modulated bending stiffness \(\varrho\) (middle, back).
   }
\end{figure}
\begin{proof}
    Taking the time derivative of the augmented functional in \teqref{eq:BendingEnergyVariableStiffnessUnitSpeedConstraint} and using integration by parts we compute
    \begin{align*}
        0 &= \int_a^b \langle \mathring\gamma , \cG^\cB \rangle + \int_a^b 2 \Lambda \langle \mathring\gamma' , T \rangle\\
        &= \int_a^b \langle \mathring\gamma , \cG^\cB \rangle + \int_a^b \big( 2 \Lambda \langle \mathring\gamma , T \rangle \big)' - 2 \Lambda ' \langle \mathring\gamma , T \rangle - 2 \Lambda \langle \mathring\gamma , T' \rangle\\
        &= \int_a^b \langle \mathring\gamma , \cG^\cB \rangle + \int_a^b - 2 \Lambda ' \langle \mathring\gamma , T \rangle - 2 \Lambda \langle \mathring\gamma , T' \rangle\\
        &= \int_a^b \langle \mathring\gamma , \cG^\cB - 2 \Lambda ' T - 2 \Lambda T' \rangle .
    \end{align*}
    Note that the boundary terms vanish since we consider variations \(\mathring\gamma \in C^{\infty}_0([a,b];\RR^n)\) and the claim follows. 
\end{proof}

\subsection{Holonomy Constrained Elastic Curves with Bending Stiffness}
In this section we will restrict our attention to curves \(\gamma\in C^{\infty}([a,b];\RR^3)\) in \(\RR^3\). As outlined in \secref{sec:intro}, Kirchhoff established torsion as an important parameter when modeling the shapes of elastic wires. One can show that, for the purpose of calculus of variations, constraints on a curve's ``total torsion'' or holonomy are equivalent~\cite{PinkallGross23:DG} and the notion of framed curves is not necessarily required. Therefore, we will refer to stationary points of \eqref{eq:BendingEnergyVariableStiffness} under arc-length preserving variations and with constrained holonomy (\defref{def:TotalTorsion}) as \emph{holonomy constrained elastic curves}\footnote{They are also known as Kirchhoff elastica.}. 

\begin{definition}[Holonomy constrained elastic curve with bending stiffness]
    A curve \(\gamma\in C^{\infty}([a,b];\RR^3)\) with \emph{bending stiffness} \(\varrho\in C^{\infty}([a,b];\RR_{>0})\) is said to be a \emph{holonomy constrained elastic curve} if it is a critical point of the energy in \teqref{eq:BendingEnergyVariableStiffness} under all variations \(\mathring\gamma\in C^\infty_0([a,b];\RR^n)\) constraining the arc-length and holonomy of the curve.
\end{definition}

We can again derive the Euler-Lagrange equation characterizing holonomy constrained elastic curves by introducing a suitable Lagrange multiplier constraining the holonomy (\thmref{thm:VariationTotalTorsion}). For holonomic constraints such as fixed length, or holonomy, the Lagrange multipliers are in fact constants~\cite[Sec. 2]{PinkallGross23:DG}.

\begin{figure}[h]
   \centering
   \includegraphics[width=\columnwidth]{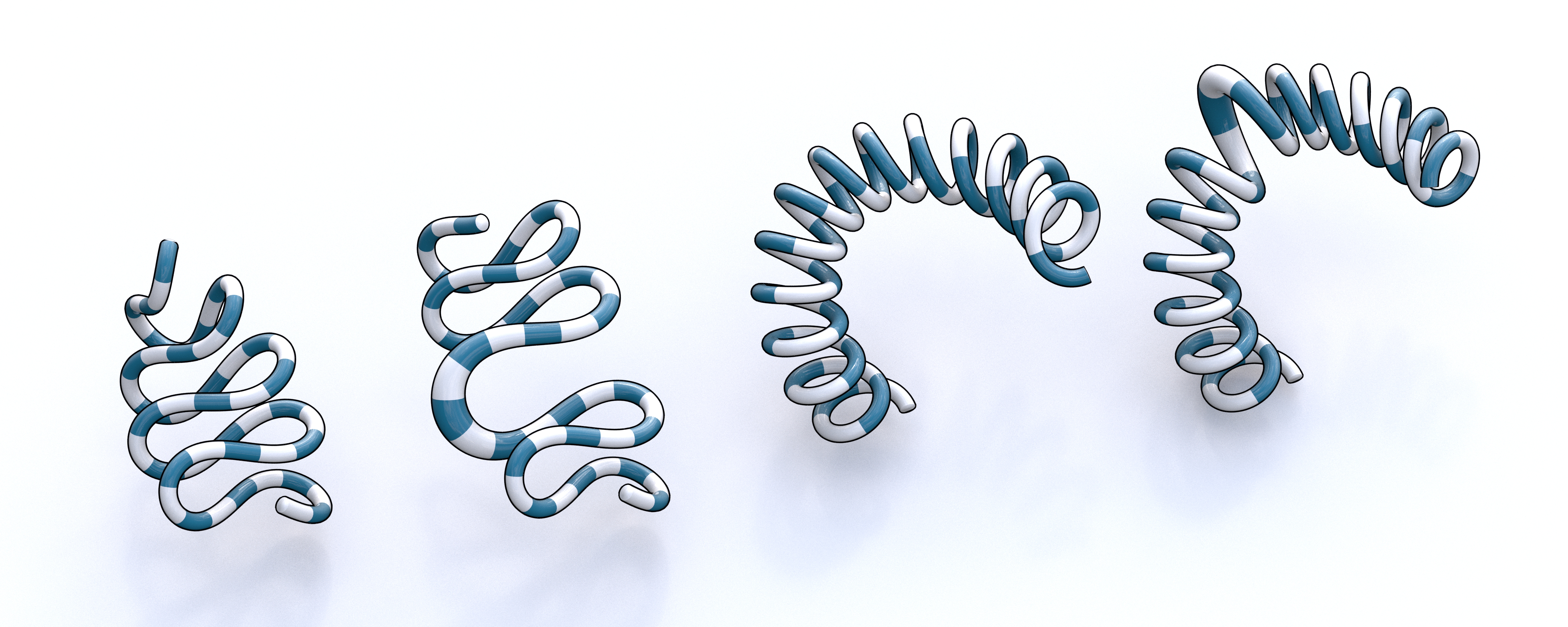}
   \caption{\label{fig:Elastic_HeavyTorsion}
   Two elastic holonomy constrained curves with constant bending stiffness obtained from integrating condition \ref{Equiv4} in \thmref{thm:3dEquivalences} with a constant bending stiffness \(\varrho\) (left, middle left) and corresponding curves obtained from the same initial conditions, but with a modulated bending stiffness \(\varrho\) (middle right, right).
   }
\end{figure}

\begin{theorem} \label{thm:ElasticCurvesODE}
A curve \(\gamma\in C^{\infty}([a,b];\RR^3)\) with bending stiffness \(\varrho\in C^{\infty}([a,b];\RR_{>0})\) is a holonomy constrained elastic curve if and only if there is a smooth function \(\Lambda\colon [a,b] \rightarrow \RR\) and a constant \(\mu \in \RR\) such that
\[\cG^\cB_{\Lambda,\mu} \coloneqq\cG^\cB_\Lambda -\mu T \times T''= \cG^\cB - 2 \Lambda ' T - 2 \Lambda T' -\mu T \times T'' = 0.\] In terms of the unit tangent field \(T\) this can be expressed by
    \begin{equation*}
        \varrho T'''-\varrho \langle T''' , T \rangle T
        +\tfrac{3}{2} \varrho \langle T' , T' \rangle T'
        +\varrho '' T'
        +\varrho ' ( 2 T''+\tfrac{3}{2} \langle T' , T' \rangle T )
        -\Lambda ' T
        -\Lambda T'
        -\mu T \times T''
        = 0,
    \end{equation*}
    or equivalently, in terms of \(\gamma\),
\begin{equation*}
    \varrho \gamma''''
    -  \varrho \langle \gamma'''' , \gamma' \rangle \gamma'
    + \tfrac{3}{2} \varrho \langle \gamma'' , \gamma'' \rangle \gamma''
    + \varrho'' \gamma''
    + \varrho' (2 \gamma'''
    + \tfrac{3}{2} \langle \gamma'' , \gamma'' \rangle \gamma')
    - \Lambda' \gamma'
    - \Lambda \gamma''
    -\mu \gamma' \times \gamma'''
    = 0.
\end{equation*}
\end{theorem}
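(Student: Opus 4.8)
The plan is to follow the Lagrange-multiplier scheme already used for \thmref{thm:TorsionFreeElasticsODE}, now with an additional multiplier accounting for the holonomy. Recall that for a holonomic constraint such as fixed holonomy the corresponding Lagrange multiplier is a genuine constant \cite[Sec.~2]{PinkallGross23:DG}, while the pointwise arc-length constraint is handled by a function $\Lambda\in C^\infty([a,b];\RR)$. Thus $\gamma$ is a holonomy constrained elastic curve if and only if there are such a $\Lambda$ and a constant $\mu\in\RR$ so that $\gamma$ is an unconstrained critical point, under all $\mathring\gamma\in C^\infty_0([a,b];\RR^3)$, of
\[
 \cB^{\Lambda,\mu}_\varrho(\gamma) \;\coloneqq\; \int_a^b \tfrac{1}{2}\varrho\langle T',T'\rangle + \Lambda(\langle\gamma',\gamma'\rangle - 1) \;-\; \mu\,\cT_W(\gamma),
\]
where the sign in front of $\mu$ is merely a matter of convention and, by \thmref{thm:VariationTotalTorsion}, the variation of the last term is independent of the choice of $W$, so the functional is unambiguous for variational purposes.

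The next step is to differentiate $\cB^{\Lambda,\mu}_\varrho$ at $\gamma$. The contribution of the first two summands is exactly the computation in the proof of \thmref{thm:TorsionFreeElasticsODE}: for $\mathring\gamma\in C^\infty_0$ the boundary terms of \thmref{thm:VaritionalGradientBendingEnergy} drop, and one integration by parts on the $\Lambda$-term leaves $\int_a^b\langle\mathring\gamma,\cG^\cB_\Lambda\rangle$ with $\cG^\cB_\Lambda = \cG^\cB - 2\Lambda'T - 2\Lambda T'$. For the holonomy term, \thmref{thm:VariationTotalTorsion} gives $d\cT_W(\mathring\gamma) = \int_a^b\langle\mathring\gamma, T\times(\tfrac{dT}{ds})'\rangle$, and since $\gamma$ is parametrized by arc-length this equals $\int_a^b\langle\mathring\gamma, T\times T''\rangle$. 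Hence the stationarity condition reads $\int_a^b\langle\mathring\gamma,\,\cG^\cB_\Lambda - \mu\,T\times T''\rangle = 0$ for every $\mathring\gamma\in C^\infty_0([a,b];\RR^3)$, and the fundamental lemma of the calculus of variations forces $\cG^\cB_{\Lambda,\mu} \coloneqq \cG^\cB_\Lambda - \mu\,T\times T'' = 0$.

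It then remains to unpack this into the two explicit forms. Substituting \eqref{eq:FreeElasticGeneralForT} and using $|\gamma'|\equiv1$ — so $\tfrac{d}{ds}$ is ordinary differentiation and $\tfrac{dT}{ds}=T'$, $\tfrac{d^2T}{ds^2}=T''$, $\tfrac{d^3T}{ds^3}=T'''$, $\tfrac{d\varrho}{ds}=\varrho'$, $\tfrac{d^2\varrho}{ds^2}=\varrho''$ — yields the first display after collecting terms; here one also uses $\langle T',T\rangle=0$, $\langle T'',T\rangle=-\langle T',T'\rangle$ and $\langle T''',T\rangle=-3\langle T'',T'\rangle$ (successive derivatives of $\langle T,T\rangle=1$) to rewrite $3\varrho\langle T'',T'\rangle T$ as $-\varrho\langle T''',T\rangle T$. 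Replacing $T=\gamma'$, $T'=\gamma''$, $T''=\gamma'''$, $T'''=\gamma''''$ then produces the $\gamma$-form. There is no genuine obstacle: the only points requiring care are the justification that the holonomy multiplier $\mu$ is a constant (cited above) and the bookkeeping of these orthogonality relations when passing between the $\cG^\cB$-, $T$- and $\gamma$-forms; the two needed integrations by parts have already been carried out in the proofs of \thmref{thm:VaritionalGradientBendingEnergy} and \thmref{thm:TorsionFreeElasticsODE}.
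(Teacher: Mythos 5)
Your argument is correct and follows essentially the same route as the paper: both invoke the fact that the holonomy multiplier $\mu$ is a genuine constant (the paper cites \cite[Thm.~2.20]{PinkallGross23:DG}), combine \thmref{thm:VariationTotalTorsion} with the arc-length-constrained Euler--Lagrange equation from \thmref{thm:TorsionFreeElasticsODE} to append the term $-\mu\,T\times T''$, and then pass to the explicit $T$-form via the substitution $-3\langle T',T''\rangle=\langle T''',T\rangle$. Your write-up merely spells out the augmented functional and the fundamental lemma more explicitly than the paper does.
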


\begin{proof}
    The conditions of {\cite[Thm. 2.20]{PinkallGross23:DG}} are met, so that for constant \(\mu \in \RR\) we get by Theorems \ref{thm:VariationTotalTorsion} and \ref{thm:TorsionFreeElasticsODE} we obtain
    \begin{equation*}
        \varrho T'''+3 \varrho \langle T'' , T' \rangle T
            +\tfrac{3}{2} \varrho \langle T' , T' \rangle T'
            +\varrho '' T'
            +\varrho ' ( 2 T''+\tfrac{3}{2} \langle T' , T' \rangle T )
            - \Lambda ' T - \Lambda T '
            =
            \mu T \times T''
    \end{equation*}
    which yields the claim after substituting \(-3 \langle T' , T'' \rangle = \langle T''' , T \rangle\).    
\end{proof}

\section{Equivalent Charecterizations}\label{sec:EquivalentCharecterizations}
In this section we derive equivalent characterizations of the elastic curves we derived in the preceding sections. More specifically, we derive analogs of a list of statements which, for the classical case with constant bending-stiffness, relate holonomy constrained elastic curves to dynamical systems such as spinning tops, pendulums, the non-linear Schr\"odinger equation and the vortex-filament flow~\cite{chern_knoppel_pedit_pinkall_2020, PinkallGross23:DG}.

\begin{theorem}\label{thm:3dEquivalences}
For an arc-length parameterized curve \(\gamma \in C^{\infty}([a,b],\RR^3)\) with unit tangent vector \(T\in C^\infty([a,b];S^2)\), the following statements are equivalent: 
    \begin{enumerate}
        \item \(\gamma\) is a holonomy constrained elastic curve.
        \item\label{Equiv1} There is a smooth function \(\Lambda\colon [a,b] \rightarrow \RR\) and constants \(\mu \in \RR\), \textbf{a} \(\in \RR^3\) such that
        \begin{equation*}
            \varrho T'' +\ \tfrac{3}{2} \varrho \langle T' , T' \rangle T
            + \varrho ' T' 
            - \Lambda T
            - \mu T \times T'
            + \textbf{a}
            = 0.
        \end{equation*}
        \item\label{Equiv2} There are constants \(\mu \in \RR\), \(\textbf{a} \in \RR^3\) such that
        \begin{equation*}
            \varrho T'' - \varrho \langle T'' , T \rangle T
            + \varrho ' T'
            + \textbf{a} - \langle \textbf{a} , T \rangle T
            - \mu T \times T'
            = 0.
        \end{equation*}
        \item\label{Equiv3} There are constants \(\mu \in \RR\), \textbf{a}, \textbf{b} \(\in \RR^3\) such that
        \begin{equation*}
            \varrho ( \gamma ' \times \gamma '' ) 
            = -\mu T + \textbf{a}\times \gamma + \textbf{b}.
        \end{equation*}
        \item\label{Equiv4} There are constants \(\textbf{a}, \textbf{b} \in \RR^3\) such that
        \begin{equation*}
            \varrho \gamma ''
            = ( \textbf{a} \times \gamma + \textbf{b} ) \times \gamma'.
        \end{equation*}
    \end{enumerate}
\end{theorem}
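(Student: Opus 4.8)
The strategy is to prove a cycle of implications, but more efficiently to treat the starting point (characterization via Theorem \ref{thm:ElasticCurvesODE}) and each of \ref{Equiv1}–\ref{Equiv4} as equivalent reformulations obtained by successively integrating the fourth-order ODE $\cG^\cB_{\Lambda,\mu}=0$. The first step is to show that $\gamma$ is a holonomy constrained elastic curve iff \ref{Equiv1} holds: starting from the expression for $\cG^\cB$ in \eqref{eq:FreeElasticGeneralForT} (with the substitution $3\langle T',T''\rangle = -\langle T''',T\rangle$ already used in Theorem \ref{thm:ElasticCurvesODE}), one recognizes the left-hand side as a total $s$-derivative plus a term along $T$. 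Concretely, differentiating the bracketed quantity $\varrho T'' + \tfrac32\varrho\langle T',T'\rangle T + \varrho' T' - \Lambda T - \mu T\times T'$ reproduces $\cG^\cB_{\Lambda,\mu}$ up to a term of the form $(\text{something})\,T$ plus $(\text{something})'T$, which can be absorbed by renaming the Lagrange multiplier $\Lambda$; the constant of integration is the vector $\mathbf a\in\RR^3$. The key point is that the only pieces of $\cG^\cB_{\Lambda,\mu}$ \emph{not} manifestly exact are multiples of $T$ and $T'$, and $-\Lambda'T-\Lambda T' = -(\Lambda T)'$ exactly, while $3\varrho\langle T'',T'\rangle T + \tfrac32\varrho'\langle T',T'\rangle T$ combines with the derivative of $\tfrac32\varrho\langle T',T'\rangle T$ to leave only a multiple of $T$, re-absorbable into $\Lambda$. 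So the main obstacle, addressed first, is the bookkeeping that shows the first integration goes through with $\Lambda$ redefined.

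Next I would pass from \ref{Equiv1} to \ref{Equiv2} by splitting $\Lambda T$ and $\mathbf a$: taking the inner product of \ref{Equiv1} with $T$ and using $\langle T'',T\rangle = -\langle T',T'\rangle$, one solves for $\Lambda$ in terms of $\varrho$, $\langle T',T'\rangle$ and $\langle\mathbf a,T\rangle$; substituting back eliminates $\Lambda$ entirely. The term $\tfrac32\varrho\langle T',T'\rangle T$ cancels against part of $\Lambda T$, and one is left precisely with the tangential-projection form $\varrho T'' - \varrho\langle T'',T\rangle T + \varrho' T' + \mathbf a - \langle\mathbf a,T\rangle T - \mu T\times T' = 0$, i.e.\ the statement that the normal component of $\varrho T'' + \varrho' T' + \mathbf a - \mu T\times T'$ vanishes. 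This direction is purely algebraic and reversible.

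The step from \ref{Equiv2} to \ref{Equiv3} is the second integration and is where I'd expect the cross-product identities to do the work. Note $\varrho T'' + \varrho' T' = (\varrho T')' = (\varrho\gamma'')'$, and $T\times T' = \gamma'\times\gamma''$. Crossing \ref{Equiv2} with $T$ (equivalently, recognizing that \ref{Equiv2} says a certain vector field has vanishing normal part, hence is a multiple of $T$), and then taking $T\times(\,\cdot\,)$, converts the tangential-projection form into an equation for $\tfrac{d}{ds}\bigl(\varrho\,\gamma'\times\gamma''\bigr)$: one computes $\bigl(\varrho\,\gamma'\times\gamma''\bigr)' = \gamma'\times(\varrho\gamma'')' = T\times\bigl(-\mathbf a + \langle\mathbf a,T\rangle T + \mu T\times T'\bigr) = -T\times\mathbf a + \mu(T\times T')\times T\cdot(\dots)$, and using $T\times(T\times T') = -T'$ (since $\langle T,T'\rangle=0$) this becomes $-T\times\mathbf a - \mu T'$. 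Since $T\times\mathbf a = \gamma'\times\mathbf a = -(\,\mathbf a\times\gamma\,)' $ and $\mu T' = \mu\gamma''=(\mu T)'$, the right-hand side is itself a total derivative, so integrating gives $\varrho(\gamma'\times\gamma'') = -\mu T + \mathbf a\times\gamma + \mathbf b$ with a new constant $\mathbf b$. Finally, \ref{Equiv3} $\Leftrightarrow$ \ref{Equiv4}: cross \ref{Equiv3} with $T=\gamma'$ on the right, use $(\gamma'\times\gamma'')\times\gamma' = \gamma''$ (valid since $|\gamma'|=1$ forces $\langle\gamma',\gamma''\rangle=0$), and note $(-\mu T)\times\gamma' = 0$, yielding $\varrho\gamma'' = (\mathbf a\times\gamma+\mathbf b)\times\gamma'$; conversely crossing \ref{Equiv4} with $\gamma'$ recovers $\varrho(\gamma'\times\gamma'')$ up to its tangential part, which is pinned down by the fact that $\langle\varrho\gamma'\times\gamma'',\gamma'\rangle = 0$ forces the $T$-coefficient, and one checks $\langle -\mu T + \mathbf a\times\gamma+\mathbf b,\gamma'\rangle$ is constant so it can be matched by choosing $\mu$. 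The main technical care throughout is ensuring each integration constant is genuinely constant (using that $\Lambda$ is a function but $\mu$, $\mathbf a$, $\mathbf b$ are constants, as guaranteed by the holonomic nature of the constraints cited before Theorem \ref{thm:ElasticCurvesODE}) and tracking how $\Lambda$ gets absorbed and re-expressed at each stage.
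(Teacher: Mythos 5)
Your proposal is correct and follows essentially the same route as the paper: integrate the Euler--Lagrange equation of \thmref{thm:ElasticCurvesODE} once to obtain \ref{Equiv1}, project onto the normal space of \(T\) (absorbing \(\Lambda\)) for \ref{Equiv2}, cross with \(T\) and recognize a total derivative to integrate to \ref{Equiv3}, and cross with \(T\) again for \ref{Equiv4}, verifying in the converse that \(\mu = \langle \mathbf{a}\times\gamma+\mathbf{b}, T\rangle\) is constant. The only cosmetic difference is that the expression in \ref{Equiv1} is in fact \emph{exactly} the antiderivative of \(\cG^\cB_{\Lambda,\mu}\), so the hedging about absorbing leftover tangential terms into \(\Lambda\) is unnecessary.
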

\begin{proof}
    For the first equivalence integrate the differential equation from \thmref{thm:ElasticCurvesODE},
    \begin{equation*}
        \varrho T'' + \tfrac{3}{2} \varrho \langle T' , T' \rangle T + \varrho ' T' - \Lambda T - \mu T \times T' = \textbf{a}
    \end{equation*}
    which is precisely \ref{Equiv1}. 

    \textbf{\ref{Equiv1} \(\Leftrightarrow\) \ref{Equiv2}:} 
    Part \ref{Equiv2} follows from \ref{Equiv1} since its the component orthogonal to \(T\):
    \begin{align*}
        0 &= \varrho T'' + \tfrac{3}{2} \varrho \langle T' , T' \rangle T + \varrho ' T' - \Lambda T - \mu T \times T' + \textbf{a} \\
        &\quad - \langle \varrho T'' + \tfrac{3}{2} \varrho \langle T' , T' \rangle T + \varrho ' T' - \Lambda T - \mu T \times T' + \textbf{a} , T \rangle T \\
        &= \varrho T'' - \varrho \langle T'' , T \rangle T + \varrho ' T' + \textbf{a} - \langle \textbf{a} , T \rangle T - \mu T \times T'        
    \end{align*}
For the converse, note that \ref{Equiv2} is the normal component of \ref{Equiv1}, while defining \(\Lambda\) as
    \begin{equation*}
        \Lambda \coloneqq\varrho \langle T'' , T \rangle + \tfrac{3}{2} \varrho \langle T' , T' \rangle + \langle \textbf{a} , T \rangle,
    \end{equation*}
recovers the component parallel to \(T\).
    
\textbf{\ref{Equiv2} \(\Leftrightarrow\) \ref{Equiv3}:}
    First note that \ref{Equiv2} is orhtogonal to \(T\) (as the normal component of \ref{Equiv1}). Then, taking a cross product with \(T\) amounts to a \(90^\circ\)-rotation in the plane normal to \(T\) and 
    \begin{align*}
        0 &= \varrho T'' \times T + \varrho ' T' \times T + \textbf{a} \times T - \mu ( T \times T' ) \times T \\ 
        &= \varrho T'' \times T + \varrho ' T' \times T + \textbf{a} \times T - \mu T' \\
        &= ( \varrho T' \times T + \textbf{a} \times \gamma - \mu T ) '.
    \end{align*}
    Thus, by integration, there is \(\textbf{b}\in\RR^3\) such that
    \begin{equation*}
        -\textbf{b} = \varrho \gamma '' \times \gamma ' + \textbf{a} \times \gamma - \mu \gamma '
    \end{equation*}
    which is equivalent to \ref{Equiv3}. 
    For the converse direction we take the derivative of \ref{Equiv3} and rotate it by \(-90^\circ\) and then take the orthogonal component, which shows equivalence.

    \textbf{\ref{Equiv3} \(\Leftrightarrow\) \ref{Equiv4}:}
    Take the cross product of \ref{Equiv3} with \(T\) to obtain
    \begin{equation*}
        \varrho \gamma '' 
        = ( \textbf{a} \times \gamma + \textbf{b} ) \times \gamma'.
    \end{equation*}
    For the converse direction we must show that the tangential components implied by \ref{Equiv4} are constant, as is required by \ref{Equiv3}. Applying \(\langle\cdot,T\rangle\) to \ref{Equiv3}, we find
    \begin{equation*}
        \langle \rho T\times T', T\rangle = \mu\langle T,T\rangle - \langle \ba\times \gamma + \bb, T\rangle
    \end{equation*}
which is equivalent to 
\begin{equation*}
    \mu = \langle \ba\times\gamma + b, T\rangle. 
\end{equation*}
To verify constancy of this candidate \(\mu\) we check that
\begin{equation*}
    \mu' = \langle \ba\times \gamma', T\rangle + \langle\ba\times\gamma + b,T'\rangle 
    =  \tfrac{1}{\rho}\langle\ba\times\gamma+b,(\ba\times\gamma + b)\times\gamma' \rangle
    = 0.
\end{equation*}
This concludes the proof.
\end{proof}

As an immediate consequence we find another necessary condition for a curve to be holonomy constrained elastic.
\begin{corollary}\label{thm:NecessaryCorollary}
    Let \(\gamma \in C^{\infty}([a,b],\RR^3)\) be an arc-length parameterized holonomy constrained elastic curve with variable bending stiffness \(\varrho\in C^{\infty}([a,b];\RR_{>0})\). Then 
    \[\varrho\,\langle \gamma'\times \gamma'', -\mu\, \gamma' + \gamma \times \textbf{a} + \textbf{b}\rangle >0. \]
\end{corollary}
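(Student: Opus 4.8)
The plan is to read off the claimed inequality directly from the equivalent characterizations in \thmref{thm:3dEquivalences}, in particular from \ref{Equiv3} and \ref{Equiv4}. First I would observe that by \ref{Equiv3} the vector $\varrho(\gamma'\times\gamma'')$ equals $-\mu T + \textbf{a}\times\gamma+\textbf{b}$, so the quantity $\varrho\,\langle\gamma'\times\gamma'',\,-\mu\gamma'+\gamma\times\textbf{a}+\textbf{b}\rangle$ is (after using $\gamma'=T$ and noting $\gamma\times\textbf{a}=-\textbf{a}\times\gamma$, so the sign must be tracked carefully with the conventions in \ref{Equiv3}) nothing but $\tfrac{1}{\varrho}\,\bigl|\varrho(\gamma'\times\gamma'')\bigr|^2 = \varrho\,|\gamma'\times\gamma''|^2$, up to matching the expression $-\mu\gamma'+\gamma\times\textbf{a}+\textbf{b}$ with $-(-\mu T+\textbf{a}\times\gamma+\textbf{b})$ or $+(-\mu T + \textbf{a}\times\gamma+\textbf{b})$. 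Since $\varrho>0$, the inequality then reduces to showing $\gamma'\times\gamma''\neq 0$, i.e. that the curvature of $\gamma$ never vanishes.

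The key step is therefore to rule out $\gamma''(s_0)\parallel\gamma'(s_0)$ at any point. Because $\gamma$ is arc-length parametrized we have $\langle\gamma',\gamma''\rangle=0$, so $\gamma'\times\gamma''=0$ is equivalent to $\gamma''(s_0)=0$. Suppose $\gamma''(s_0)=0$. Using characterization \ref{Equiv4}, $\varrho\gamma'' = (\textbf{a}\times\gamma+\textbf{b})\times\gamma'$, so at $s_0$ the vector $\textbf{a}\times\gamma(s_0)+\textbf{b}$ is parallel to $\gamma'(s_0)=T(s_0)$; write $\textbf{a}\times\gamma(s_0)+\textbf{b}=c\,T(s_0)$ for some scalar $c$. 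From the computation at the end of the proof of \thmref{thm:3dEquivalences}, $\mu = \langle\textbf{a}\times\gamma+\textbf{b},T\rangle$ is constant along the curve, so $c=\mu$. Now I would differentiate $\textbf{a}\times\gamma+\textbf{b}-\mu T$ and evaluate at $s_0$: its derivative is $\textbf{a}\times\gamma'-\mu T' = \textbf{a}\times T - \tfrac{\mu}{\varrho}(\textbf{a}\times\gamma+\textbf{b})\times T$, and at $s_0$ the second term vanishes since $\textbf{a}\times\gamma(s_0)+\textbf{b}=\mu T(s_0)$ is parallel to $T(s_0)$. Thus at $s_0$ we get $(\textbf{a}\times\gamma+\textbf{b}-\mu T)'(s_0)=\textbf{a}\times T(s_0)$; but $\textbf{a}\times\gamma+\textbf{b}-\mu T$ vanishes at $s_0$, and in fact one sees it satisfies a linear ODE (from \ref{Equiv2}/\ref{Equiv4}), so vanishing at one point forces $\textbf{a}\times T(s_0)=0$ as well — hence $\textbf{a}\parallel T(s_0)$ and $\textbf{b}$ is determined. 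Iterating (or invoking uniqueness for the linear ODE satisfied by $V:=\textbf{a}\times\gamma+\textbf{b}$, namely $V'=\textbf{a}\times T$ with $\varrho\gamma''=V\times T$) shows $V\equiv\mu T$ globally, whence $\gamma''\equiv 0$ and $\gamma$ is a straight line segment — excluded by the hypothesis that $\gamma'\neq 0$ is a genuine (non-degenerate) curve, or rather by the implicit standing assumption that the holonomy-constrained elastic curves under consideration are not line segments.

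I expect the main obstacle to be the bookkeeping of signs and of the degenerate case: one must be careful that the statement as phrased really does reduce to $\varrho|\gamma'\times\gamma''|^2>0$ and not to its negative, and one must pin down precisely in what sense straight line segments are being excluded (the corollary is stated for "holonomy constrained elastic curves", and a line segment trivially satisfies all the Euler–Lagrange equations with $\textbf{a}=\textbf{b}=\mu=0$, giving $0>0$, which is false). The cleanest fix is to state and use that $V=\textbf{a}\times\gamma+\textbf{b}$ satisfies a first-order linear ODE coupled with $\varrho\gamma''=V\times\gamma'$, so that $\gamma''$ vanishing at one point propagates to $\gamma''\equiv0$; thus for any curve with $\gamma''\not\equiv0$ the curvature is nowhere zero and the strict inequality holds, while for line segments the corollary is vacuous or must be explicitly excluded. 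Writing this ODE-propagation argument carefully is the one genuinely non-routine part; everything else is substitution into \thmref{thm:3dEquivalences}.
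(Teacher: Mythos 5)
Your first step is exactly the paper's (unwritten) argument: the corollary is presented as an immediate consequence of \thmref{thm:3dEquivalences}, obtained by pairing condition \ref{Equiv3} with \(\gamma'\times\gamma''\). After absorbing the sign discrepancy \(\gamma\times\mathbf{a}=-\mathbf{a}\times\gamma\) into the free constant \(\mathbf{a}\) (which you correctly flag), this gives \(\varrho\,\langle\gamma'\times\gamma'',-\mu\gamma'+\mathbf{a}\times\gamma+\mathbf{b}\rangle=\varrho^{2}\,|\gamma'\times\gamma''|^{2}\geq 0\), and your identification of \(\mu=\langle\mathbf{a}\times\gamma+\mathbf{b},T\rangle\) as the constant tangential component also matches the computation at the end of the paper's proof of \thmref{thm:3dEquivalences}. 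Up to this point your proposal and the paper agree.

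The gap is in your attempt to upgrade \(\geq\) to \(>\). Writing \(V=\mathbf{a}\times\gamma+\mathbf{b}\) and \(W=V-\mu T\), the equation you invoke is \(W'=\mathbf{a}\times T-\tfrac{\mu}{\varrho}\,W\times T\), which is \emph{inhomogeneous} in \(W\): the forcing term \(\mathbf{a}\times T\) does not vanish when \(W\) does, so \(W(s_0)=0\) carries no rigidity, and your claim that it forces \(\mathbf{a}\times T(s_0)=0\) and then \(W\equiv 0\) is unjustified. Indeed the propagation statement is false: a planar wavelike elastica with constant \(\varrho\) and \(\mu=0\) is a holonomy constrained elastic curve satisfying condition \ref{Equiv4} with \(\mathbf{a}\neq 0\); by \thmref{thm:HafnerAnalogy} its curvature is an affine function of position and vanishes at isolated inflection points, yet the curve is nowhere near a straight line segment. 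At such a point the quantity in the corollary equals \(0\), so no argument can deliver the strict inequality at every point. What the substitution honestly yields is \(\varrho^{2}|\gamma'\times\gamma''|^{2}\geq 0\), with equality exactly where \(\gamma''=0\); the strict version holds only away from inflection points (equivalently, for curves with nowhere-vanishing curvature), and the ODE-propagation step you propose cannot close this gap.
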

Although in our setup the condition of \corref{thm:NecessaryCorollary} is not sufficient, Hafner and Bickel~\cite{Hafner23} show that it becomes sufficient when one additionally allows for anisotropic cross-sections.

\subsection{The Curvature of Elastic Curves}\label{sec:TheCurvatureOfElasticCurves}
Equivalent charaterizations of the three types of curves can also be stated in terms of their curvature functions. While traditionally, the curvature function is considered a scalar quantity which is only defined for plane curves, we may also define a \emph{curvature function} \(\kappa\in C^{\infty}([a,b];\RR^{n-1})\) for curves in \(\RR^n\)~\cite[Sec. 4.3]{PinkallGross23:DG}: let \(\gamma\in C^{\infty}([a,b];\RR^n)\) and \(N\coloneqq[N_1,\ldots,N_{n-1}]\) be made of parallel unit normal fields such that \(\det(T,N_1,\ldots, N_{n-1})=1\). From \(1=\langle T, T\rangle\) we notice that \(\langle T,T'\rangle=0\), \ie, \(T'\) is a normal vector field to \(\gamma\). Therefore, we define the curvature function \(\kappa\in C^{\infty}([a,b];\RR^{n-1})\) of \(\gamma\) by 
    \begin{align}
        T' = -N\kappa.
    \end{align}
Note that, by means of \(\kappa\), the derivative of any normal vector field \(Y=Ny\) can be expressed as~\cite{PinkallGross23:DG}
\begin{equation}
    \label{eq:GenericNormalFieldDerivative}
    Y' = \langle\kappa,y\rangle T + Ny'.
\end{equation}
\begin{lemma}\label{thm:TEqsInTermsOfKappa}
Let \(\gamma\in C^{\infty}([a,b];\RR^n)\) be an arc-length parameterized curve, then 
\begin{align}
    \label{eq:TEqsInTermsOfKappa}
    T'&= -N\kappa \notag\\
    T''&= -\langle\kappa,\kappa\rangle T - N\kappa'\\
    T'''&= -3\langle\kappa,\kappa'\rangle T + N(\langle\kappa,\kappa\rangle\kappa - \kappa'').\notag
\end{align}
\end{lemma}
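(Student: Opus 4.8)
The plan is to compute the three derivatives directly from the defining relation $T' = -N\kappa$, using the key structural fact \eqref{eq:GenericNormalFieldDerivative} about how the frame $N$ evolves. The first line $T' = -N\kappa$ is simply the definition of $\kappa$, so nothing is needed there. For the second line I would differentiate $T' = -N\kappa = -\sum_i N_i \kappa_i$. Since each $N_i$ is a parallel unit normal field, $N_i' = \lambda_i T$ for some scalar $\lambda_i$; pairing with $T$ and using $\langle N_i, T\rangle = 0$ gives $\lambda_i = \langle N_i', T\rangle = -\langle N_i, T'\rangle = \langle N_i, N\kappa\rangle = \kappa_i$. Hence $N_i' = \kappa_i T$, i.e.\ in matrix notation $N' = T\kappa^{\mathsf T}$ acting appropriately, which is exactly the special case $y = \kappa$ of \eqref{eq:GenericNormalFieldDerivative} applied to the normal field $Y = N\kappa = -T'$ — but one must be slightly careful since $Y = -T'$ is itself $N\kappa$, so \eqref{eq:GenericNormalFieldDerivative} with $y = \kappa$ reads $(N\kappa)' = \langle\kappa,\kappa\rangle T + N\kappa'$. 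Therefore $T'' = -(N\kappa)' = -\langle\kappa,\kappa\rangle T - N\kappa'$, which is the claimed second line.

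For the third line I would differentiate $T'' = -\langle\kappa,\kappa\rangle T - N\kappa'$ term by term. The first term differentiates to $-2\langle\kappa,\kappa'\rangle T - \langle\kappa,\kappa\rangle T' = -2\langle\kappa,\kappa'\rangle T + \langle\kappa,\kappa\rangle N\kappa$, using $T' = -N\kappa$. For the second term $-(N\kappa')'$, I again invoke \eqref{eq:GenericNormalFieldDerivative}, now with $y = \kappa'$: $(N\kappa')' = \langle\kappa,\kappa'\rangle T + N\kappa''$, so $-(N\kappa')' = -\langle\kappa,\kappa'\rangle T - N\kappa''$. Adding the two contributions gives $T''' = -3\langle\kappa,\kappa'\rangle T + N(\langle\kappa,\kappa\rangle\kappa - \kappa'')$, as claimed. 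Since $\gamma$ is arc-length parameterized, derivatives with respect to arc-length coincide with ordinary derivatives, so no rescaling terms intervene.

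There is essentially no hard part here: the only thing to watch is the bookkeeping of the tangential versus normal components and the consistent use of \eqref{eq:GenericNormalFieldDerivative}, which already packages the Frenet-type relation $N_i' = \kappa_i T$ coming from parallelism of the $N_i$. A mild subtlety worth stating explicitly in the proof is that $T'$ is indeed normal (from $\langle T,T\rangle \equiv 1$), so the decomposition into $T$-component and $N$-component is legitimate at each stage; the excerpt already records this observation just before the definition of $\kappa$. I would present the computation as a short three-line induction/iteration: establish $N_i' = \kappa_i T$ once, then crank the derivative three times.
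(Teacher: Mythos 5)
Your proof is correct and follows essentially the same route as the paper: both reduce everything to the relation \((Ny)'=\langle\kappa,y\rangle T+Ny'\) from \teqref{eq:GenericNormalFieldDerivative} (equivalently, \(N'\kappa=\langle\kappa,\kappa\rangle T\) and \(N'\kappa'=\langle\kappa,\kappa'\rangle T\)) and then differentiate term by term. Your explicit derivation of \(N_i'=\kappa_i T\) from parallelism is just a slightly more detailed unpacking of the same fact the paper invokes.
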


\begin{proof}
    This is straightforward computation for which we use that \(\langle N,N\rangle = \id_{T^\perp}\) implies \(\langle N',N\rangle=0\), hence by \teqref{eq:GenericNormalFieldDerivative} \(N'\kappa = \langle\kappa,\kappa\rangle T\) and \(N'\kappa'=  \langle\kappa',\kappa\rangle T\). Then
    \[T'' = -N'\kappa - N\kappa' = -\langle\kappa,\kappa\rangle T - N\kappa'\]
    and 
    \begin{align*}
        T''' &= -2\langle\kappa',\kappa\rangle T - \langle \kappa,\kappa\rangle T' - N'\kappa' - N\kappa''\\
        &=-3\langle\kappa,\kappa'\rangle T + N(\langle\kappa,\kappa\rangle\kappa - \kappa'').
    \end{align*}
\end{proof}

From plugging the expressions in \teqref{eq:TEqsInTermsOfKappa} into the Euler-Lagrange equations for the different types of elastic curves we obtain equivalent characterizations in terms of conditions on their curvature functions. For example, \teqref{eq:FreeElasticGeneralForT} for the case of constant \(\varrho\) gives 
\begin{align*}
    0 &= T''' + 3\langle T', T''\rangle T + \tfrac{3}{2}\langle T', T'\rangle T' \\
      &= -N(\kappa'' + \tfrac{1}{2}\langle\kappa,\kappa\rangle\kappa).
\end{align*}
We conclude that
    an arc-length parameterized curve \(\gamma\in C^{\infty}([a,b];\RR^n)\) is free elastic if and only if its curvature function satisfies
    \[0=\kappa'' + \tfrac{1}{2}\langle\kappa,\kappa\rangle\kappa.\]
For the special case of planar curves we retrieve the well known formula \cite[Ch. 2]{PinkallGross23:DG}
\begin{align}    \label{eq:ClassicFreeElasticCurvatureEq}
    \kappa'' + \tfrac{\kappa^3}{2} = 0.
\end{align}

In more generality, consider an arc-length parameterized curve \(\gamma\in C^{\infty}([a,b];\RR^n)\) with bending stiffness \(\varrho\in C^{\infty}([a,b];\RR_{>0})\). Then, from  \teqref{eq:FreeElasticGeneralForT} we find that
\begin{align*}
    0 &= \varrho(T''' + 3\langle T', T''\rangle T + \tfrac{3}{2}\langle T', T'\rangle T') + \varrho''T' + 2\varrho'T'' + \tfrac{3}{2}\varrho'\langle T', T'\rangle T \\
    &= N( -\varrho\kappa'' - \tfrac{1}{2}\varrho\langle\kappa,\kappa\rangle\kappa - \varrho''\kappa - 2\varrho'\kappa') + (-2\varrho\langle\kappa,\kappa\rangle + \tfrac{3}{2}\varrho'\langle\kappa,\kappa\rangle)T \\ 
    &= N(-(\varrho\kappa)'' -\tfrac{1}{2}\varrho\langle\kappa,\kappa\rangle\kappa ) + (-\tfrac{1}{2}\varrho'\langle\kappa,\kappa\rangle)T.
\end{align*}
Hence, \(\gamma\) is a free elastic curve with bending stiffness if and only if 
\begin{align*}
    \begin{cases}
        0= (\varrho\kappa)'' +\tfrac{1}{2}\varrho\langle\kappa,\kappa\rangle\kappa\\
        0= \tfrac{1}{2}\varrho'\langle\kappa,\kappa\rangle.
    \end{cases}
\end{align*}
From the second equation we conclude that whenever \(\gamma\) is not a segment of a straight line, it must have constant bending stiffness \(\varrho\) and the defining equations reduce to \teqref{eq:ClassicFreeElasticCurvatureEq}.

By \thmref{thm:TorsionFreeElasticsODE}, adding the arc-length constraint leads to an extra term in the Euler-Lagrange equation for elastic curves which can also be expressed in terms of the curvature function \(\kappa\) as
\begin{align*}
    0 = -\Lambda'T - \Lambda T'  = -\Lambda' T + N(\Lambda\kappa).
\end{align*}
\begin{theorem}\label{thm:TorsionFreeElasticInCurvature}
    An arc-length parameterized curve \(\gamma\in C^{\infty}([a,b];\RR^n)\) with bending stiffness \(\varrho\in C^{\infty}([a,b];\RR_{>0})\) is elastic if and only if its curvature function satisfies
    \begin{align}
        \label{eq:KappaELTorsionFree}
        \begin{cases}
            0 =(\varrho\kappa)'' +\tfrac{1}{2}\varrho\langle\kappa,\kappa\rangle\kappa -\Lambda\kappa\\
            0 = \Lambda' + \tfrac{1}{2}\varrho'\langle\kappa,\kappa\rangle.
        \end{cases}
    \end{align}
    for some \(\Lambda\in C^\infty([a,b])\).
\end{theorem}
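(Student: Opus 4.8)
The plan is to reduce everything to \thmref{thm:TorsionFreeElasticsODE}, which characterizes elastic curves by the vanishing of $\cG^\cB_\Lambda = \cG^\cB - 2\Lambda'T - 2\Lambda T'$ for some $\Lambda\in C^\infty([a,b])$, and then to rewrite this vector identity in the pointwise frame $(T,N_1,\dots,N_{n-1})$. The normal/tangential split of $\cG^\cB$ itself has essentially already been recorded in the discussion immediately preceding the theorem: substituting \lemref{thm:TEqsInTermsOfKappa} into \teqref{eq:FreeElasticGeneralForT} gives $\cG^\cB = N\bigl(-(\varrho\kappa)'' - \tfrac12\varrho\langle\kappa,\kappa\rangle\kappa\bigr) - \tfrac12\varrho'\langle\kappa,\kappa\rangle\,T$. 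Hence the only additional work is to split the two Lagrange-multiplier terms, and here $T' = -N\kappa$ (again \lemref{thm:TEqsInTermsOfKappa}) gives at once $-2\Lambda'T - 2\Lambda T' = N(2\Lambda\kappa) - 2\Lambda' T$.

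Adding the two contributions, one reads off that $\cG^\cB_\Lambda$ has normal part $N\bigl(-(\varrho\kappa)'' - \tfrac12\varrho\langle\kappa,\kappa\rangle\kappa + 2\Lambda\kappa\bigr)$ and tangential part $-\bigl(\tfrac12\varrho'\langle\kappa,\kappa\rangle + 2\Lambda'\bigr)T$. Since $\{T,N_1,\dots,N_{n-1}\}$ is a basis of $\RR^n$ at every point and $Nv=0$ forces $v=0$, the equation $\cG^\cB_\Lambda = 0$ is \emph{equivalent} to the simultaneous vanishing of these two coefficients, i.e. $(\varrho\kappa)'' + \tfrac12\varrho\langle\kappa,\kappa\rangle\kappa - 2\Lambda\kappa = 0$ and $2\Lambda' + \tfrac12\varrho'\langle\kappa,\kappa\rangle = 0$. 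These are precisely the equations \eqref{eq:KappaELTorsionFree} after relabeling $2\Lambda$ as $\Lambda$ (harmless, since the multiplier ranges over all smooth functions), and combining this with \thmref{thm:TorsionFreeElasticsODE} delivers both implications of the stated equivalence.

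I do not anticipate a genuine obstacle; the argument is essentially bookkeeping in the parallel normal frame, building on the two displayed computations already present in the text. The two points that call for a little care are keeping track of the factor $2$ on the Lagrange multiplier when passing between \thmref{thm:TorsionFreeElasticsODE} and \eqref{eq:KappaELTorsionFree}, and being explicit that the step from the vector equation $\cG^\cB_\Lambda = 0$ to the vanishing of its two frame components is an equivalence rather than a one-way implication — this is exactly what makes the ``if'' direction go through, and it is where the linear independence of $T$ and the $N_i$ is used.
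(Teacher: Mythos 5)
Your argument is correct and is essentially the paper's own derivation: the text preceding the theorem performs exactly this decomposition, substituting \lemref{thm:TEqsInTermsOfKappa} into \teqref{eq:FreeElasticGeneralForT} to get \(\cG^\cB = N\bigl(-(\varrho\kappa)'' - \tfrac12\varrho\langle\kappa,\kappa\rangle\kappa\bigr) - \tfrac12\varrho'\langle\kappa,\kappa\rangle\,T\) and then splitting the multiplier term via \(T'=-N\kappa\), with the vanishing of the two frame components equivalent to \(\cG^\cB_\Lambda=0\) by linear independence. Your explicit handling of the factor \(2\) on the multiplier (which the paper silently absorbs between the definition of \(\cG^\cB_\Lambda\) and the displayed ODE in \thmref{thm:TorsionFreeElasticsODE}) is a harmless relabeling and, if anything, tidies up a small inconsistency in the source.
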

Notably, by the second condition: whenever \(\varrho\) is constant, so is \(\Lambda\) and vice versa. 

Moreover, adding a constraint on the holonomy of a curve  \(\gamma\in C^{\infty}([a,b];\RR^3)\) adds the term 
\begin{align*}
    0 = -\mu T\times T'' = -\mu T\times(-\langle\kappa,\kappa\rangle T - N\kappa') = (T\times N)(\mu\kappa') = N(\mu J\kappa'),
\end{align*}
where \(J=\left(\begin{smallmatrix}
    0 & -1 \\ 1 & 0
\end{smallmatrix}\right)\) is the 
endomorphism field on \(T^\perp\) 
which corresponds to a \(90^\circ\) rotation around \(T\) in the normal space. 
\begin{theorem}\label{thm:ElasticInCurvature}
    An arc-length parameterized curve \(\gamma\in C^{\infty}([a,b];\RR^3)\) with bending stiffness \(\varrho\in C^{\infty}([a,b];\RR_{>0})\) is holonomy constrained elastic if and only if its curvature function satisfies
    \begin{align}
        \begin{cases}
            0 =(\varrho\kappa)'' +\tfrac{1}{2}\varrho\langle\kappa,\kappa\rangle\kappa + \mu J\kappa' -\Lambda\kappa \\
            0 = \Lambda' + \tfrac{1}{2}\varrho'\langle\kappa,\kappa\rangle
        \end{cases}
    \end{align}
    for some \(\Lambda\in C^\infty([a,b])\) and \(\mu\in \RR\).
\end{theorem}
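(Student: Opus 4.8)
The plan is to derive the curvature characterization directly from the tangent-field version of the Euler--Lagrange equation in \thmref{thm:ElasticCurvesODE}, exactly mirroring the computations that have already been carried out for the free elastic case and for \thmref{thm:TorsionFreeElasticInCurvature}. Concretely, \thmref{thm:ElasticCurvesODE} says that \(\gamma\) is holonomy constrained elastic if and only if \(\cG^\cB - 2\Lambda' T - 2\Lambda T' - \mu\, T\times T'' = 0\) for some \(\Lambda\in C^\infty([a,b])\) and constant \(\mu\in\RR\). We already know from the paragraph preceding \thmref{thm:TorsionFreeElasticInCurvature} that, plugging in \lemref{thm:TEqsInTermsOfKappa}, the expression \(\cG^\cB - 2\Lambda' T - 2\Lambda T'\) equals \(N\big((\varrho\kappa)'' + \tfrac12\varrho\langle\kappa,\kappa\rangle\kappa - \Lambda\kappa\big) - \big(\Lambda' + \tfrac12\varrho'\langle\kappa,\kappa\rangle\big)T\). (Strictly: the free-elastic computation gives the \(\varrho\)-terms, and the \(-2\Lambda'T - 2\Lambda T'\) contribution is the one-line computation \(-\Lambda'T + N(\Lambda\kappa)\) displayed just above the theorem, combined with the second-equation bookkeeping; I would simply cite these.) So the only genuinely new ingredient is the holonomy term \(-\mu\, T\times T''\).

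The key step is therefore the short computation, already displayed in the excerpt, that
\begin{align*}
    -\mu\, T\times T'' = -\mu\, T\times\big(-\langle\kappa,\kappa\rangle T - N\kappa'\big) = \mu\,(T\times N)\kappa' = N(\mu J\kappa'),
\end{align*}
using \(T\times T = 0\) and the definition of \(J\) as the \(90^\circ\) rotation about \(T\) in the normal space, i.e. \(T\times N = NJ\) as endomorphism fields on \(T^\perp\). Adding this purely normal contribution to the previously obtained expression, the full Euler--Lagrange equation becomes
\begin{align*}
    0 = N\Big((\varrho\kappa)'' + \tfrac12\varrho\langle\kappa,\kappa\rangle\kappa + \mu J\kappa' - \Lambda\kappa\Big) - \Big(\Lambda' + \tfrac12\varrho'\langle\kappa,\kappa\rangle\Big)T.
\end{align*}
Since \(T\) and the columns of \(N\) form a pointwise orthonormal frame of \(\RR^3\), this vanishes if and only if the normal part and the tangential part vanish separately, which is precisely the stated system. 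Conversely, if the curvature system holds for some \(\Lambda\) and \(\mu\), then running the same identities backwards reconstructs \(\cG^\cB_{\Lambda,\mu}=0\), so \(\gamma\) is holonomy constrained elastic by \thmref{thm:ElasticCurvesODE}; I would note that \(\mu\) being a constant on the curvature side matches the constant \(\mu\) in \thmref{thm:ElasticCurvesODE}, so no consistency issue arises.

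I do not expect a serious obstacle here: the statement is essentially an assembly of three already-performed substitutions (free-elastic terms, the \(\Lambda\) terms, and the new \(\mu\) term), and the proof is a few lines. The one place to be slightly careful is the sign and placement of \(J\): one must verify that \(T\times N\) acts as \(NJ\) rather than \(-NJ\) with the orientation convention \(\det(T,N_1,\dots,N_{n-1})=1\) used in \secref{sec:TheCurvatureOfElasticCurves} (for \(n=3\), \(\det(T,N_1,N_2)=1\) means \(T\times N_1 = N_2\) and \(T\times N_2 = -N_1\), which is exactly \(J\) acting on the coefficient vector as written). Everything else is routine, so the write-up can be kept to essentially the displayed computation plus the frame-orthogonality argument and the remark that \(\varrho\) constant forces \(\Lambda\) constant and vice versa.
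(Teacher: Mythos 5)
Your proposal is correct and follows essentially the same route as the paper, which likewise obtains the theorem by substituting the identities of \lemref{thm:TEqsInTermsOfKappa} into the Euler--Lagrange equation of \thmref{thm:ElasticCurvesODE}, reusing the already-computed \(\varrho\)- and \(\Lambda\)-contributions, adding the single new computation \(-\mu\,T\times T'' = N(\mu J\kappa')\), and splitting into tangential and normal components with respect to the frame \((T,N)\). The only (immaterial) discrepancy is an overall sign on the normal part that you silently absorb into the arbitrary constant \(\mu\), which does not affect the ``for some \(\mu\in\RR\)'' statement.
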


\subsection{Closed Examples} 
Preliminary numerical experiments suggest that there are a number of interesting closed examples to be discovered. In \figref{fig:PlanarClosedElastica_nfold_Symmetries} we see approximations of planar examples of closed curves which were obtained from integrating \teqref{eq:PendulumEqDefiningEq}. 
\begin{figure}[t]
   \centering
   \includegraphics[width=.975\columnwidth]{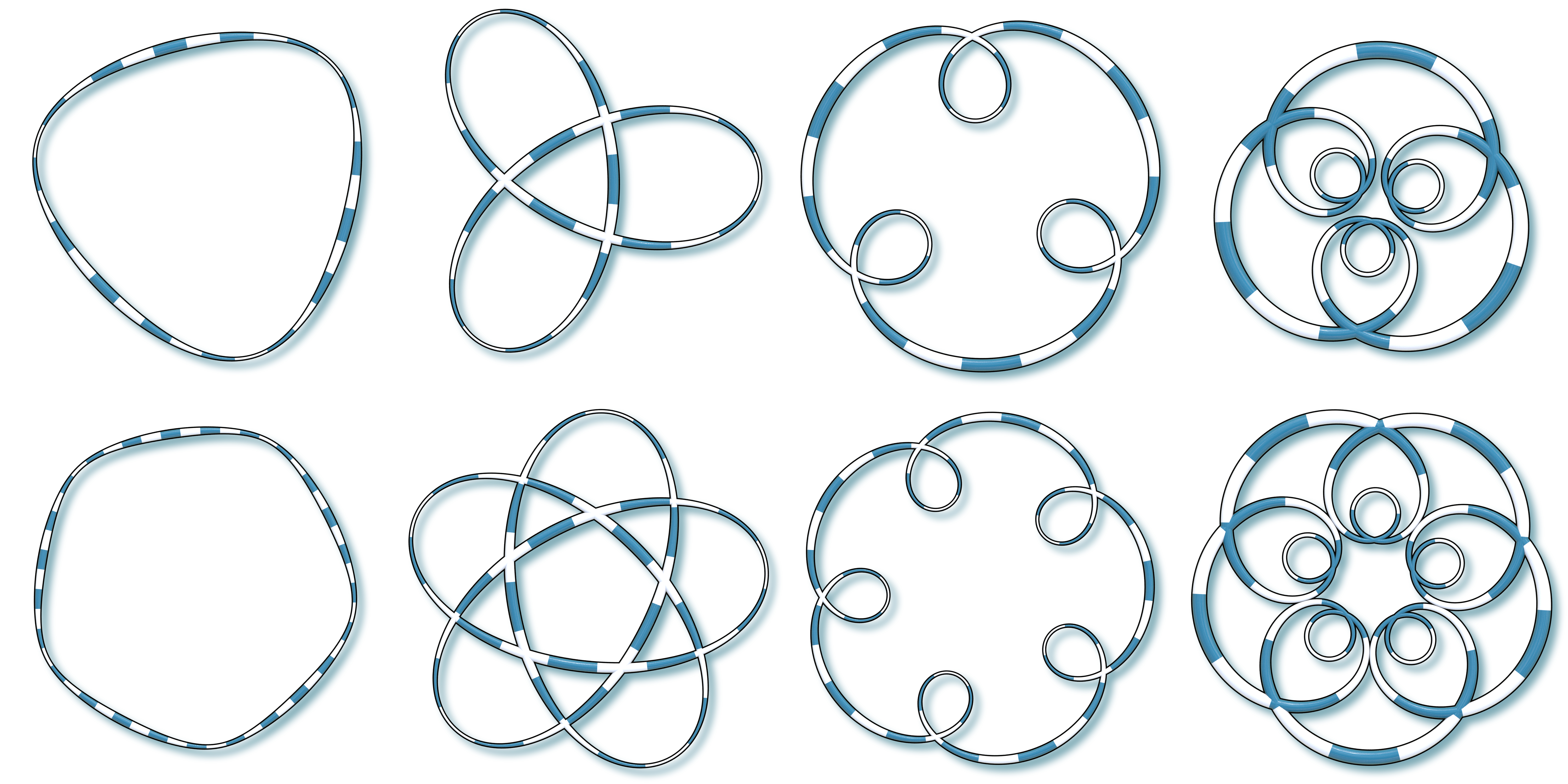}
   \caption{\label{fig:PlanarClosedElastica_nfold_Symmetries}Examples of planar elastic curves with variable bending stiffness obtained from integrating \teqref{eq:PendulumEqDefiningEq} which close up and exhibit \(3\)-fold \resp\@ \(5\)-fold symmetries.
   }
\end{figure}

\begin{figure}[b]
   \centering
   \includegraphics[width=.495\columnwidth]{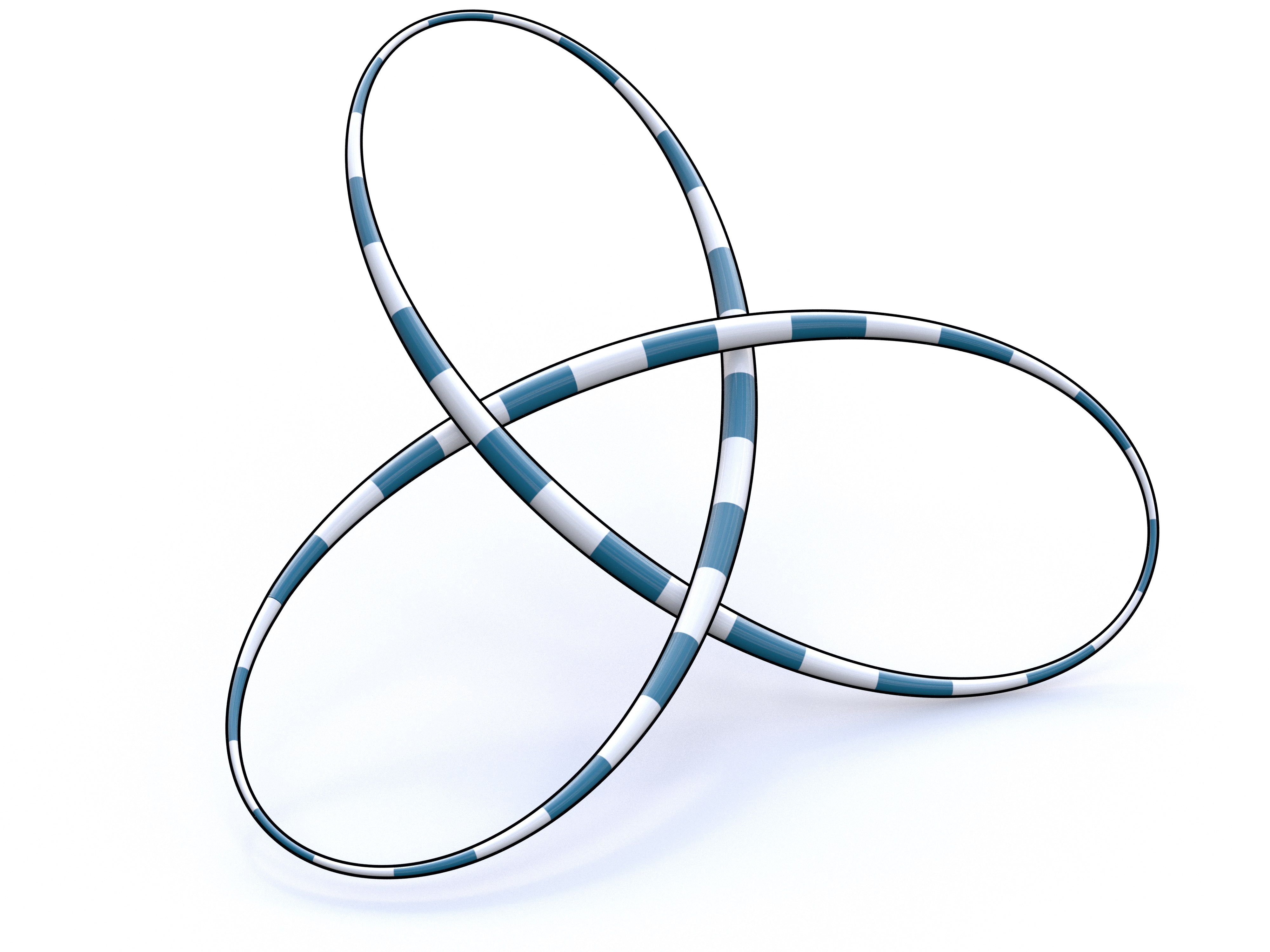}
   \includegraphics[width=.495\columnwidth]{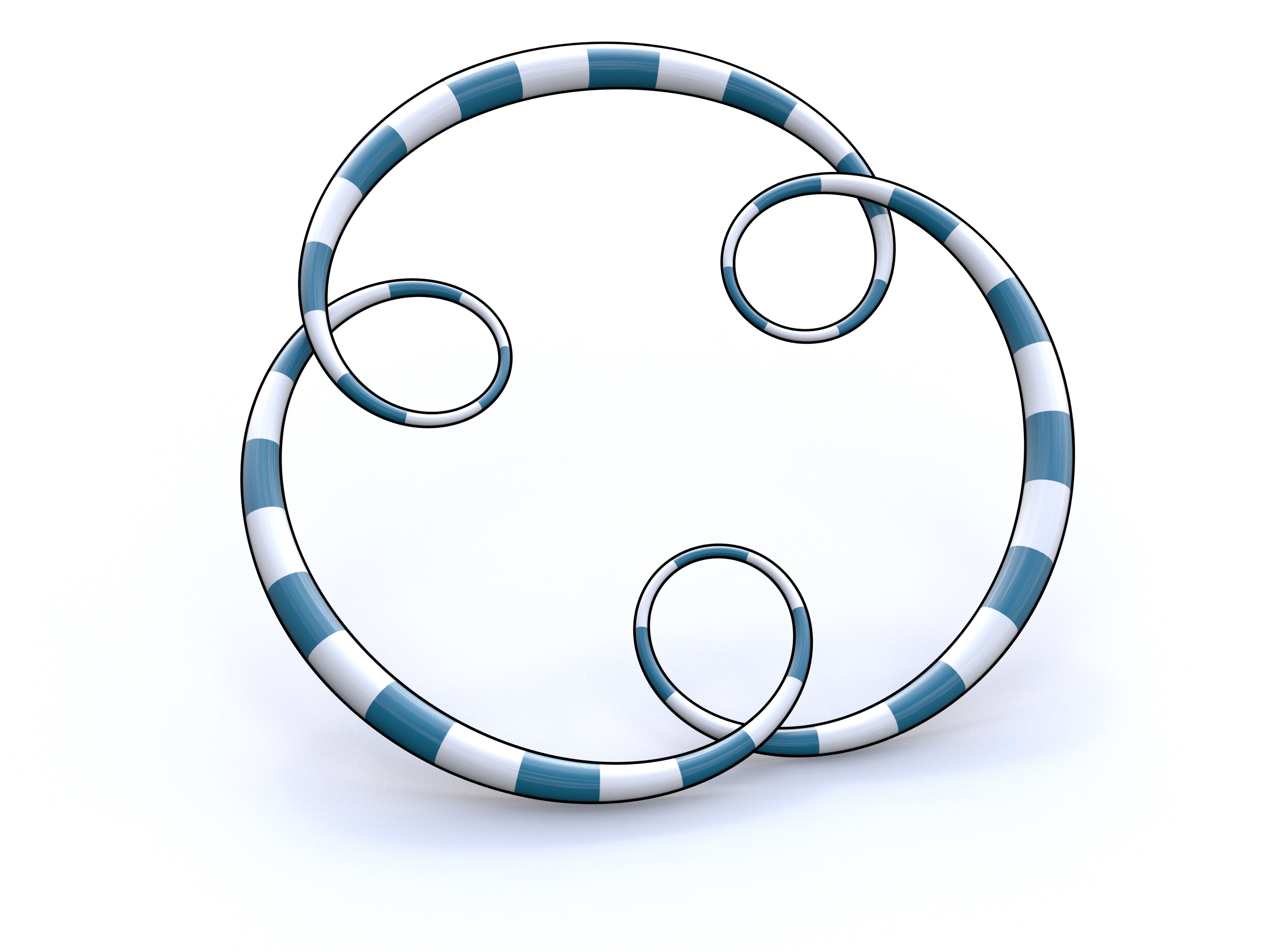}
   \caption{\label{fig:3DElastic_embedded}Embedded examples of closed holonomy constrained elastic curves with variable bending stiffness obtained from integrating condition \ref{Equiv4} in \thmref{thm:3dEquivalences}.
   }
\end{figure}
By adding holonomy, to each of the planar examples, we expect to find a corresponding \(1\)-parameter family of three-dimensional examples~\cite{fuller1971writhing}. Approximations of such non-planar closed elastic curves corresponding to planar examples in \figref{fig:PlanarClosedElastica_nfold_Symmetries} were obtained from integrating condition \ref{Equiv4} in \thmref{thm:3dEquivalences} and are depicted in \figref{fig:3DElastic_embedded}. Note that stationary curves do not generally exhibit any symmetries, and as illustrated by the two examples in \figref{fig:TwoAndThreefoldProteinLike}, even those that appear to do so can display considerable geometric complexity. 
\begin{figure}[h]
   \centering
   \includegraphics[width=\columnwidth]{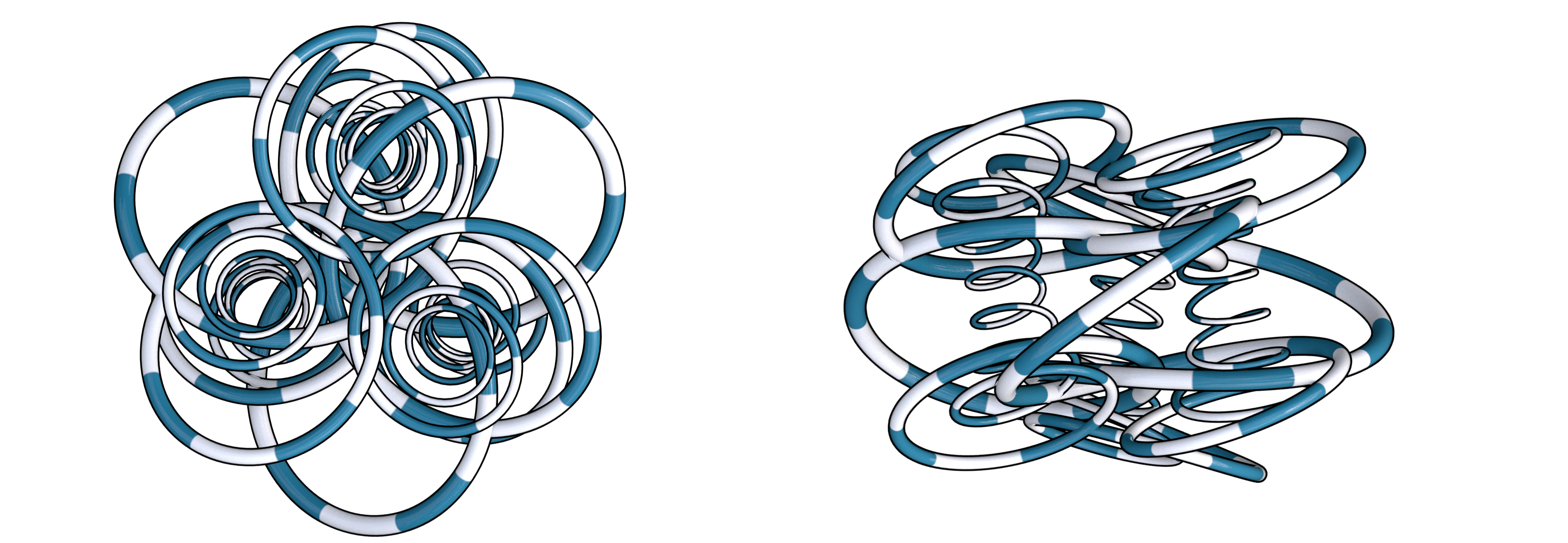}
   \caption{\label{fig:TwoAndThreefoldProteinLike}Top and side view of a closed holonomy constrained elastic curve with variable bending stiffness comprised of three thin helices connected with a threefold symmetry. This example was obtained from integrating condition \ref{Equiv4} in \thmref{thm:3dEquivalences}.
   }
\end{figure}

However, this approach is highly unintuitive, since the relation between those parameters and the constraints is highly nonlinear. Clearly, it would be favorable to have more intuitive computational methods for generating the curves for which the parameters such as length, holonomy and the bending stiffness can be prescribed as an input. However, these computational aspects as well as an explicit analysis of the closing conditions of the curves (see, \eg, \cite{LangerSinger1996}) are beyond the scope of the present work. In \appref{sec:Visualizations}, we outline how our preliminary results were obtained by numerically optimizing the parameters of the ODEs in Theorems \ref{thm:PendulumStiffness} and \ref{thm:3dEquivalences} to satisfy the closing conditions.

\section{Connections with Dynamical Systems}\label{sec:ConnectionsWithDynamicalSystems}
For the case of planar elastic curves with variable bending stiffness Hafner and Bickel found that planar elastic curves with variable bending stiffness are curves that are non-tangentially intersected in only their inflection points (\(\kappa=0\)) by a straight line \[\{x\in\RR^2\mid \langle \ba, x\rangle + c = 0\}\] for some \(\ba\in\RR^2, c\in\RR\).

\begin{theorem}[{\cite{Hafner21}}] \label{thm:HafnerAnalogy}
    A planar, unit-speed curve \(\gamma \in C^{\infty}([a,b],\RR^2) \) is elastic with bending stiffness \(\varrho\) if and only if
    \begin{equation}
        \label{eq:Vector_HafnerAnalogy}
        ( \tfrac{1}{2} \varrho \kappa^2 - \Lambda ) T + ( \varrho \kappa )' JT = \textbf{a}
    \end{equation}
    for some \(\textbf{a} \in \RR^2\). Equivalently, for some \(c \in \RR\), the following equations are satisfied:
    \begin{align}
        \Lambda &= \tfrac{1}{2} \varrho \kappa^2- \langle \textbf{a} , T \rangle,\label{equ:bending_energy_HafnerAnalogy} \\
        \varrho \kappa &= -\langle J \textbf{a} , \gamma \rangle + c.\label{equ:inflection_points_equ}
    \end{align}
\end{theorem}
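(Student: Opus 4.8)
The plan is to recognize \teqref{eq:Vector_HafnerAnalogy} as a first integral of the planar Euler--Lagrange system for elastic curves with bending stiffness, and then to peel off \teqref{equ:bending_energy_HafnerAnalogy} and \teqref{equ:inflection_points_equ} by projecting it onto the moving frame \(\{T,JT\}\), the latter after one more integration. First I would specialize \thmref{thm:TorsionFreeElasticInCurvature} to \(n=2\) (in the plane there is no holonomy to constrain, so this is the relevant characterisation): now \(\kappa\in C^\infty([a,b];\RR)\) is a scalar, \(N\) reduces to the single parallel unit normal \(JT\), \(\langle\kappa,\kappa\rangle=\kappa^2\), and the structure equations of \lemref{thm:TEqsInTermsOfKappa} read \(T'=\kappa JT\) and \((JT)'=-\kappa T\) (in the orientation convention of \cite{Hafner21}). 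Thus \(\gamma\) is elastic with bending stiffness \(\varrho\) if and only if there is a \(\Lambda\in C^\infty([a,b])\) with \((\varrho\kappa)''+\tfrac12\varrho\kappa^3-\Lambda\kappa=0\) and \(\Lambda'+\tfrac12\varrho'\kappa^2=0\).

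Next I would set \(\textbf{a}:=(\tfrac12\varrho\kappa^2-\Lambda)T+(\varrho\kappa)'\,JT\) and differentiate along arc length. Using the structure equations, a short computation gives
\[\textbf{a}'=-\big(\Lambda'+\tfrac12\varrho'\kappa^2\big)\,T+\big((\varrho\kappa)''+\tfrac12\varrho\kappa^3-\Lambda\kappa\big)\,JT ,\]
so \(\textbf{a}'\equiv 0\) exactly when both Euler--Lagrange equations hold; this proves ``\(\Rightarrow\)''. Since \(\{T,JT\}\) is a frame, the converse is immediate: if \teqref{eq:Vector_HafnerAnalogy} holds for a constant \(\textbf{a}\) and some smooth \(\Lambda\), then differentiating and reading off the \(T\)- and \(JT\)-components recovers the two Euler--Lagrange equations verbatim, so \(\gamma\) is elastic with bending stiffness \(\varrho\) by \thmref{thm:TorsionFreeElasticInCurvature}.

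For the equivalence with the scalar equations I would pair \teqref{eq:Vector_HafnerAnalogy} with the frame vectors. Pairing with \(T\) gives \(\langle\textbf{a},T\rangle=\tfrac12\varrho\kappa^2-\Lambda\), which is \teqref{equ:bending_energy_HafnerAnalogy}. Pairing with \(JT\) gives \((\varrho\kappa)'=\langle\textbf{a},JT\rangle\), and since \(J\) is skew-symmetric and \(\gamma'=T\) this equals \(-\langle J\textbf{a},\gamma'\rangle=-\tfrac{d}{ds}\langle J\textbf{a},\gamma\rangle\); integrating once yields \(\varrho\kappa=-\langle J\textbf{a},\gamma\rangle+c\) for a constant \(c\), which is \teqref{equ:inflection_points_equ}. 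Conversely, \teqref{equ:bending_energy_HafnerAnalogy} is the \(T\)-component of \teqref{eq:Vector_HafnerAnalogy} and differentiating \teqref{equ:inflection_points_equ} returns the \(JT\)-component (using \(-\langle J\textbf{a},\gamma\rangle'=\langle\textbf{a},JT\rangle\) again), so that \(\langle\textbf{a},T\rangle T+\langle\textbf{a},JT\rangle JT=\textbf{a}\) reassembles \teqref{eq:Vector_HafnerAnalogy}; hence the pair \teqref{equ:bending_energy_HafnerAnalogy}--\teqref{equ:inflection_points_equ} is equivalent to it.

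I do not expect a serious obstacle: the statement is \thmref{thm:TorsionFreeElasticInCurvature} rewritten in the plane, once integrated and repackaged into frame components. The one point that deserves care is verifying that the tangential part of \(\textbf{a}'\) reproduces the multiplier equation \(\Lambda'+\tfrac12\varrho'\kappa^2=0\) rather than an independent constraint --- equivalently, that the \(\Lambda\) appearing in \teqref{eq:Vector_HafnerAnalogy} is forced to be the Lagrange multiplier. A secondary nuisance is the orientation convention: \teqref{eq:Vector_HafnerAnalogy} is sensitive to the sign in \(T'=\pm\kappa JT\) even though the curvature equations \teqref{eq:KappaELTorsionFree} are not, so I would fix that sign once at the outset to match \cite{Hafner21}.
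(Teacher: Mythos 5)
Your proposal is correct and follows essentially the same route as the paper: both identify \teqref{eq:Vector_HafnerAnalogy} as the once-integrated planar Euler--Lagrange equation and obtain \teqref{equ:bending_energy_HafnerAnalogy} and \teqref{equ:inflection_points_equ} as its \(T\)- and \(JT\)-components, the latter integrated once more via \(\langle\ba,JT\rangle=-\tfrac{d}{ds}\langle J\ba,\gamma\rangle\). The only cosmetic difference is that the paper integrates the tangent-vector form of the Euler--Lagrange equation (\thmref{thm:TorsionFreeElasticsODE}) and then substitutes the planar Frenet relations, whereas you start from the curvature form (\thmref{thm:TorsionFreeElasticInCurvature}) and verify constancy of the candidate \(\ba\) by differentiation, which makes the ``if and only if'' slightly more explicit.
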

\begin{proof}
First, integrating the Euler-Lagrange equations in \thmref{thm:TorsionFreeElasticsODE} gives
\begin{equation}
    \label{eq:TorsionFreeElasticELIntegrated}
    \varrho'T' + \varrho T'' + \tfrac{3}{2}\varrho\langle T',T'\rangle T - \Lambda T = \ba 
\end{equation}
for some \(\ba\in\RR^3\). Now, for planes curves, \(N=-JT\), hence \(T' = \kappa JT\) and \(T'' = \kappa'JT - \kappa^2 T\), which plugged into \teqref{eq:TorsionFreeElasticELIntegrated} yields
\teqref{eq:Vector_HafnerAnalogy}. The equivalence statement follows from the observation that \teqref{equ:bending_energy_HafnerAnalogy} is the tangential component of \eqref{eq:Vector_HafnerAnalogy}, while, since \(JT\) denotes the \(90^\circ\)-rotation of \(T\),  \teqref{equ:inflection_points_equ} is its normal component integrated once.
\end{proof}
\begin{figure}[h]
   \centering
   \includegraphics[width=.8\columnwidth]{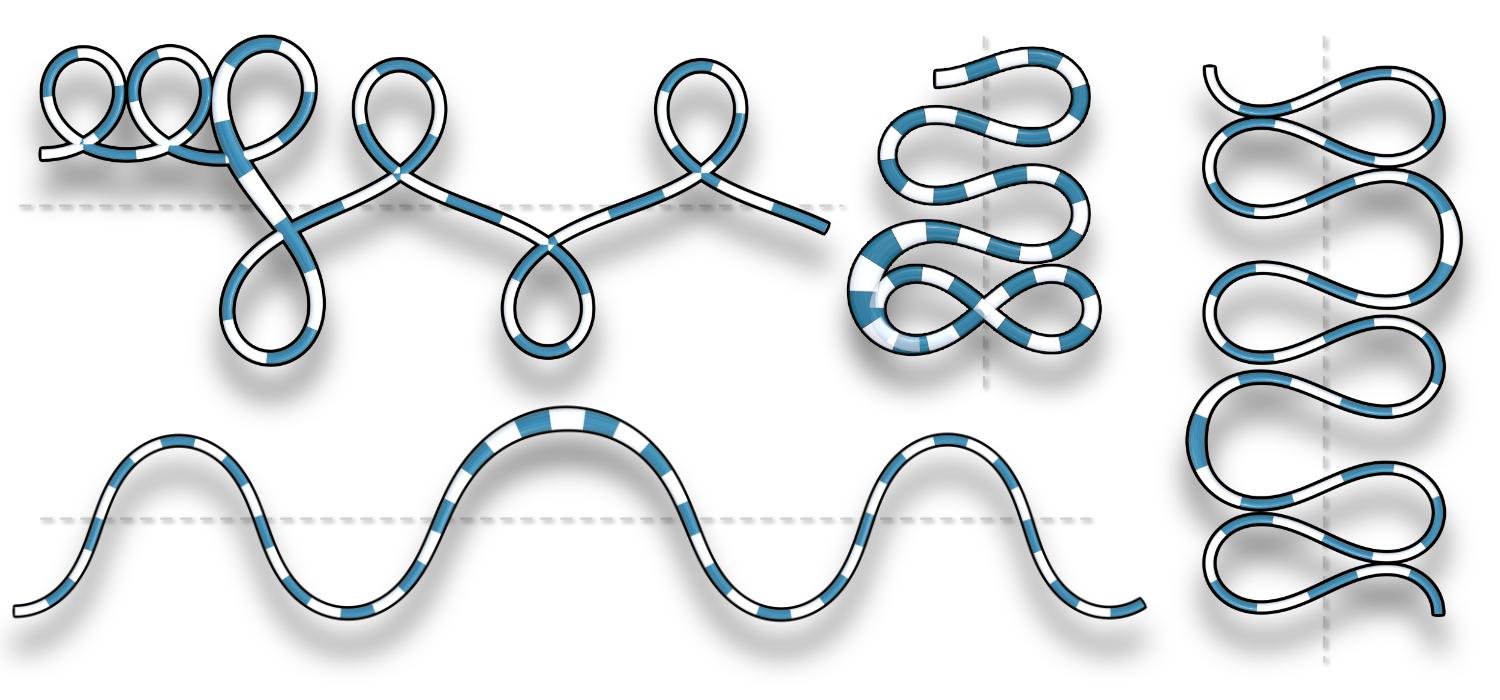}
   \caption{\label{fig:InflectionPointsLine} A collection of planar elastic curves with variable bending stiffness showcasing the result by \cite{Hafner21} that their inflection points are co-linear.
   }
\end{figure}

Notably, \thmref{thm:HafnerAnalogy} generalizes a classical result to the case of variable bending stiffness. It is therefore appropriate to look for other results that can be generalized in this context.

\subsection{Pendulum Analogy}
For the case of constant bending stiffness, arc-length parameterized elastic curves and the pendulum equation share an intricate relationship~\cite{PinkallGross23:DG}. A generalization of this relationship has been established in ~\cite{palmer2020anisotropic} who gave an anisotropic version of the pendulum equation. With \thmref{thm:PendulumStiffness} we give a corresponding generalization of this relationship for the case of variable bending stiffness.

\begin{theorem} \label{thm:PendulumStiffness}
    Let  \(\gamma\in C^{\infty}([a,b];\RR^n)\) be an arc-length parameterized curve with bending stiffness \(\varrho\in C^{\infty}([a,b];\RR_{>0})\). Then \(\gamma\) is elastic if and only if 
    \begin{equation}\label{eq:PendulumEqDefiningEq}
        \varrho T'' + \varrho' T' - \varrho \langle T'' , T \rangle T
        = \textbf{a} - \langle \textbf{a} , T \rangle T
    \end{equation}
    and the pointwise multiplier \(\Lambda\) is given by:
    \begin{equation*}
        \Lambda = \tfrac{1}{2} \varrho \langle T' , T' \rangle
        - \langle \textbf{a} , T \rangle = \tfrac{1}{2} \varrho \langle \kappa , \kappa \rangle
        - \langle \textbf{a} , T \rangle.
    \end{equation*}
    If \(\varrho\) is constant, \(\Lambda\) is constant.
\end{theorem}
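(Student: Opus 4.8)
The plan is to obtain the equivalence by integrating the Euler--Lagrange equation of \thmref{thm:TorsionFreeElasticsODE} exactly once and then splitting the resulting first-order identity into its components parallel and orthogonal to $T$. The key first step is to notice that $\cG^\cB_\Lambda$ is an exact $s$-derivative: a short computation (using $(\langle T',T'\rangle)'=2\langle T',T''\rangle$ and the product rule) gives
\[
\cG^\cB_\Lambda=\tfrac{d}{ds}\Big(\varrho T''+\varrho' T'+\tfrac32\varrho\langle T',T'\rangle T-\Lambda T\Big).
\]
Hence, by \thmref{thm:TorsionFreeElasticsODE}, $\gamma$ is elastic if and only if there exist a smooth $\Lambda$ and a constant $\ba\in\RR^n$ with $\varrho T''+\varrho' T'+\tfrac32\varrho\langle T',T'\rangle T-\Lambda T=\ba$; I will call this the integrated equation.

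Next I would decompose the integrated equation using $\langle T,T'\rangle=0$ together with its derivative $\langle T,T''\rangle=-\langle T',T'\rangle$. Pairing with $T$ collapses the left-hand side to $\tfrac12\varrho\langle T',T'\rangle-\Lambda$, so $\Lambda=\tfrac12\varrho\langle T',T'\rangle-\langle\ba,T\rangle$; since $T'=-N\kappa$ and the columns of $N$ are orthonormal, $\langle T',T'\rangle=\langle\kappa,\kappa\rangle$, which is the claimed formula for $\Lambda$. Subtracting $\langle\,\cdot\,,T\rangle\,T$ from the integrated equation leaves $\varrho T''+\varrho'T'+\varrho\langle T',T'\rangle T=\ba-\langle\ba,T\rangle T$, and rewriting $\langle T',T'\rangle=-\langle T'',T\rangle$ turns the left-hand side into $\varrho T''+\varrho'T'-\varrho\langle T'',T\rangle T$, i.e.\ exactly \teqref{eq:PendulumEqDefiningEq}. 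For the converse, given \teqref{eq:PendulumEqDefiningEq} I would \emph{define} $\Lambda\coloneqq\tfrac12\varrho\langle T',T'\rangle-\langle\ba,T\rangle$ (which is smooth), add $\langle\ba,T\rangle T$ to both sides of \teqref{eq:PendulumEqDefiningEq}, and check term by term that this reproduces the integrated equation; differentiating it then yields $\cG^\cB_\Lambda=0$, so $\gamma$ is elastic by \thmref{thm:TorsionFreeElasticsODE}.

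For the constancy statement, assume $\varrho'\equiv 0$, so the integrated equation reads $\varrho T''+\tfrac32\varrho\langle T',T'\rangle T-\Lambda T=\ba$; pairing it with $T'$ and using $\langle T,T'\rangle=0$ gives $\varrho\langle T'',T'\rangle=\langle\ba,T'\rangle$, and then differentiating $\Lambda=\tfrac12\varrho\langle T',T'\rangle-\langle\ba,T\rangle$ gives $\Lambda'=\varrho\langle T'',T'\rangle-\langle\ba,T'\rangle=0$. (Alternatively one may cite the second line of \thmref{thm:TorsionFreeElasticInCurvature}, which states $\Lambda'=-\tfrac12\varrho'\langle\kappa,\kappa\rangle$.) I do not expect a genuine obstacle: the only places that need care are the verification that $\cG^\cB_\Lambda$ is an exact derivative, and, in the converse, checking that the rearrangement of \teqref{eq:PendulumEqDefiningEq} matches the integrated equation \emph{exactly} rather than merely modulo a multiple of $T$ — the tangential/normal bookkeeping through the identity $\langle T',T'\rangle=-\langle T'',T\rangle$ is the most error-prone part.
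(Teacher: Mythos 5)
Your proposal is correct and follows essentially the same route as the paper: integrate the Euler--Lagrange equation of \thmref{thm:TorsionFreeElasticsODE} once to get $\varrho T''+\varrho'T'+\tfrac32\varrho\langle T',T'\rangle T-\Lambda T=\ba$, split into tangential and normal components via $\langle T'',T\rangle=-\langle T',T'\rangle$, and reverse the steps for the converse by defining $\Lambda$ explicitly. The only nit is that with the paper's definition $\cG^\cB_\Lambda=\cG^\cB-2\Lambda'T-2\Lambda T'$ the exact-derivative identity should read $\cG^\cB_\Lambda=\tfrac{d}{ds}(\varrho T''+\varrho'T'+\tfrac32\varrho\langle T',T'\rangle T-2\Lambda T)$, a factor of $2$ that both you and the paper harmlessly absorb into the arbitrary multiplier $\Lambda$.
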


\begin{proof}
Similar to the proof of Theorem \ref{thm:3dEquivalences} we obtain
    \begin{equation} \label{equ:ProofPendulum1}
        \varrho T'' + \varrho' T' + \tfrac{3}{2} \varrho \langle T' , T' \rangle T - \Lambda T
        = \textbf{a}
    \end{equation}
    for elastic curves, for some \(\textbf{a} \in \RR^n\).
    \teqref{eq:PendulumEqDefiningEq} is the component orthogonal to \(T\). Taking the inner product of \teqref{equ:ProofPendulum1} with \(T\) yields
    \begin{align*}
        \langle \varrho' T' + \varrho T'' 
        + \tfrac{3}{2} \varrho \langle T' , T' \rangle T
        - \Lambda T , T \rangle
        &= \langle \textbf{a} , T \rangle,
    \end{align*}
    which is equivalent to
    \begin{align*}
        \varrho \langle T'' , T \rangle + \tfrac{3}{2} \varrho \langle T' , T' \rangle - \langle \textbf{a} , T \rangle
        &= \Lambda.
    \end{align*}
    Using \(\langle T'' , T \rangle = - \langle T' , T' \rangle\) results in the second equation. Conversely, let \(T\in C^{\infty}([a,b];\SS^{n-1})\) and \(\varrho\in C^{\infty}([a,b];\RR_{>0})\) satisfy \teqref{eq:PendulumEqDefiningEq} for some \(\textbf{a} \in \RR^n\). Define
    \begin{equation*}
        \Lambda = \tfrac{1}{2} \varrho \langle T' , T' \rangle
        - \langle \textbf{a} , T \rangle
    \end{equation*}
    and substitute \(-\langle T'' , T \rangle\) with \(\langle T' , T' \rangle\) and \(-\langle \textbf{a} , T \rangle\) with \(\Lambda - \tfrac{1}{2} \varrho \langle T' , T' \rangle\) in \teqref{eq:PendulumEqDefiningEq}:
    \begin{equation*}
        \varrho T'' + \varrho' T' + \varrho \langle T' , T' \rangle T = \textbf{a} + \Lambda T - \tfrac{1}{2} \varrho \langle T' , T \rangle T
    \end{equation*}
    which is equivalent to the integrated differential equation for elastic curves with Lagrange multiplier \(\Lambda\). 
\end{proof}

\begin{corollary}\label{thm:CorollaryNewBendingEnergy}
The bending energy of an elastic curve \(\gamma \in C^{\infty}([a,b],\RR^n)\) that is parameterized by arc-length is given by
    \begin{equation*}
        \cB_{\varrho}(\gamma) = \int_a^b \Lambda + \langle \textbf{a} , \gamma(b) - \gamma(a) \rangle
    \end{equation*}
\end{corollary}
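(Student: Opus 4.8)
The plan is to reduce everything to the characterization of elastic curves in \thmref{thm:PendulumStiffness}, which already isolates the integrand $\tfrac12\varrho\langle T',T'\rangle$ appearing in the bending energy. Concretely, I would start from the definition $\cB_\varrho(\gamma)=\tfrac12\int_a^b\varrho\langle\tfrac{dT}{ds},\tfrac{dT}{ds}\rangle\,ds$ and recall that, since $\gamma$ is parameterized by arc-length, $ds$ is just $dx$ on $[a,b]$, $\tfrac{dT}{ds}=T'$, and $T=\gamma'$.

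Next I would invoke \thmref{thm:PendulumStiffness}: because $\gamma$ is elastic there is a constant vector $\textbf{a}\in\RR^n$ such that the pointwise multiplier satisfies $\Lambda=\tfrac12\varrho\langle T',T'\rangle-\langle\textbf{a},T\rangle$, equivalently $\tfrac12\varrho\langle T',T'\rangle=\Lambda+\langle\textbf{a},T\rangle$. Substituting this into the integrand gives
\[
    \cB_\varrho(\gamma)=\int_a^b\big(\Lambda+\langle\textbf{a},T\rangle\big)\,ds=\int_a^b\Lambda\,ds+\int_a^b\langle\textbf{a},T\rangle\,ds.
\]

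It then remains to evaluate the last integral. Since $\textbf{a}$ is constant it may be pulled out of the inner product, and using $T=\gamma'$ together with the fundamental theorem of calculus one gets $\int_a^b\langle\textbf{a},T\rangle\,ds=\langle\textbf{a},\int_a^b\gamma'\,dx\rangle=\langle\textbf{a},\gamma(b)-\gamma(a)\rangle$, which is exactly the claimed boundary term. Assembling the two pieces yields the stated formula.

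There is really no serious obstacle here: the only point worth flagging is that the identity uses the elasticity of $\gamma$ in an essential way (it is what produces the constant $\textbf{a}$ and the expression for $\Lambda$ via \thmref{thm:PendulumStiffness}), and that the reduction of $ds$ to $dx$ and $T$ to $\gamma'$ relies on the arc-length parameterization hypothesis. Everything else is a one-line substitution plus the fundamental theorem of calculus.
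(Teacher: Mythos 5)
Your argument is correct and is exactly the intended derivation: the paper states this as an immediate corollary of \thmref{thm:PendulumStiffness}, whose identity \(\Lambda=\tfrac12\varrho\langle T',T'\rangle-\langle\textbf{a},T\rangle\) you rearrange and integrate, finishing with the fundamental theorem of calculus on \(\langle\textbf{a},\gamma'\rangle\). No gaps; the remarks about arc-length parameterization and the role of elasticity in producing the constant \(\textbf{a}\) are apt.
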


\subsubsection{The Planar Case}
Restricting our attention to the \(2\)-dimensional case and defining \(\textbf{a} = -q e_2\) as well as \(T=(\begin{smallmatrix}
        \sin \theta\\
        - \cos \theta
    \end{smallmatrix})\) for \(\theta\colon [a,b] \rightarrow [0,2\pi]\) we find that the first entry of the vector-valued \eqref{eq:PendulumEqDefiningEq} becomes 
\begin{align*}
    q\sin\theta\cos\theta = (\varrho\theta'' - \varrho'\theta')\cos\theta.
\end{align*}
Assuming that \(\theta\notin \tfrac{\pi}{2}\ZZ\) this holds if and only if 
\begin{align}
    \label{eq:DampedPendulumEquation}
    \theta'' = -\tfrac{q}{\varrho}\sin\theta -  \tfrac{\varrho'}{\varrho}\theta'.
\end{align}
Notably, \teqref{eq:DampedPendulumEquation} describes the motion of a pendulum with time–dependent rod length \(\ell(t)\)~\cite[Ch. 8]{Xin:2014:CDA}, since after a change of variables \(\rho = \ell^2\) and \(q = g\ell\), it becomes 
\begin{equation}
    \label{eq:VariableLengthPendulumEquation}
    \theta'' = -\tfrac{g}{\ell}\sin\theta -  2\tfrac{\ell'}{\ell}\theta'.
\end{equation}

Notably, this shows that the relation to the pendulum equation remains intact even when accounting for variable bending stiffness. 

Alterations in the length of a pendulum rod can lead to either amplification or damping of its motion. In particular, well-timed adjustments can transform an initially oscillating pendulum into one that loops fully over the top in continuous circles. In the context of elastic curves, this corresponds to modulating the bending stiffness by superimposing a constant term with one or more smooth bump functions. Such modifications can, for example, induce transitions from periodic curve segments without inflection points---corresponding to looping pendulum motion of the tangent vector---to segments with inflection points---corresponding to oscillatory motion of the tangent vector. \figref{fig:InflectionPointsLine} illustrates examples of this phenomenon.

\subsection{Vortex Filament Flow}\label{sec:VFF}
In this section we will investigate the relations between holonomy constrained elastic curves (with variable bending stiffness) and the dynamics of thin vortex filaments with variable thickness in an incompressible viscous fluid (see, \eg, \cite{Padilla:2019:BRI, chern_knoppel_pedit_pinkall_2020}).
\begin{definition}
    A \emph{vortex filament} is a map \(\gamma\colon S^1\to\RR^3\). Together with an additional function \(a\colon S^1\to \RR_{>0}\) a vortex filament is a \emph{vortex filament with thickness}.
\end{definition}
As with rods of variable bending stiffness, we imagine a vortex filament with thickness to be the geometry swept out by a round disc of radius \(a(s)\), perpendicular to \(T(s)\) and centered at \(\gamma(s)\). Based on a geometric problem formulation, Padilla \etal~\cite{Padilla:2019:BRI} gave first-order equations of motion of these filaments with variable thickness in an incompressible viscous fluid. In the absence of gravity \cite[Eq. (13)]{Padilla:2019:BRI} states that, for \(c_1,c_2\in\RR\) the time evolution of a unit strength vortex-filament is (up to a constant) given by 
\begin{align}
    \label{eq:FiniteThicknessVortexEvolution}
    \dot\gamma = u_{\rm BS}^{(a_0)} - \log(\tfrac{a}{a_0})\, T\times \tfrac{dT}{ds}  -\tfrac{c_2}{a}\tfrac{da}{ds}\,T, 
\end{align}
where 
\[ u_{\rm BS}^{(a_0)} \coloneqq \int_{S^1} \tfrac{T(\tilde s)\times(\gamma(s)-\gamma(\tilde s))}{\left( |\gamma(s)-\gamma(\tilde s)|^2 + c_1a_0^2 \right)^{\nicefrac{3}{2}} }d\tilde s,\]
is the cut-off Biot--Savart integral for a vortex filament of constant thickness \(a_0\) (for which \(0<a\ll a_0\ll 1\)), 
\begin{align*}
    \log(\tfrac{a}{a_0})\, T\times \tfrac{dT}{ds}
\end{align*}
is the \emph{localized induction} term and  the last summand is the \emph{tangential velocity}\footnote{the Lagrangian form of a viscous Burgers' equation} (\cite[Eq. (14)]{Padilla:2019:BRI}).  

\subsubsection{Asymptotic Analysis for Thin Vortex Filaments}
We note that \(\dot\gamma\) becomes infinite in the limit \(a\to 0\), \ie, the vortex filament moves infinitely fast. Therefore, for small filament thickness \(a\), the localized induction term becomes the dominating term for the filament evolution. By a suitable re-scaling of the time, we can control this behavior (slowing down the ``playback speed'' of the filament evolution) and reveal some non-trivial relation to holonomy constrained elastic curves with variable thickness. 

The following spells out this procedure in more detail. Define \(\tau \coloneqq \lambda\,t\) for some \(\lambda\in\RR\). Then, \(\tfrac{\partial}{\partial\tau} = \tfrac{1}{\lambda}\tfrac{\partial}{\partial t}\) and with \(\dot\gamma = \tfrac{\partial}{\partial t}\big\vert_{t=0}\gamma\), we get
\begin{align}
    \label{eq:RetimedVortexFilamentEvolution}
    \tfrac{\partial}{\partial\tau}\big\vert_{\tau=0}\gamma = \tfrac{1}{\lambda}u_{\rm BS}^{(a_0)} - \tfrac{1}{\lambda}\log(\tfrac{a}{a_0})\, T\times \tfrac{dT}{ds}  -\tfrac{1}{\lambda}\tfrac{c_2}{a}\tfrac{da}{ds}\,T.
\end{align}
Now, choose \(\lambda\sim \log(a_1)\) for some \(a_1\) such that \(0<a(s) \sim a_1 \ll a_0\ll 1\). In other words, \(a(s)=a_1^{\varrho(s)}\) for some \(\varrho\colon S^1\to \RR_{>0}\) and 
\begin{equation*}
    \tfrac{\log(a_0)}{\log(a_1)}\eqcolon\varepsilon \ll 1.
\end{equation*}
That is, \teqref{eq:RetimedVortexFilamentEvolution} can be written as
\begin{align}
\label{eq:RetimedVortexFilamentEvolutionWithLambdaChoice}
    \tfrac{\partial}{\partial\tau}\big\vert_{\tau=0}\gamma &= \tfrac{1}{\log(a_1)}u_{\rm BS}^{(a_0)} - \tfrac{\log(a)-\log(a_0)}{\log(a_1)}\, T\times \tfrac{dT}{ds}  -\tfrac{c_2}{\log(a_1)}\tfrac{d\log(a)}{ds}\,T \\
    &= \tfrac{1}{\log(a_1)}u_{\rm BS}^{(a_0)} - (\varrho-\epsilon) T\times \tfrac{dT}{ds}  -c_2\tfrac{d\varrho}{ds}\,T.
\end{align}
Note that \(|u_{\rm BS}^{(a_0)}|\) is of the order \(\log(a_0)\) the first summand is of order \(\varepsilon\). Therefore, for thin vortex filaments with \(\varepsilon\rightarrow 0\), asymptotically (dropping the terms of order \(\varepsilon\)), \teqref{eq:RetimedVortexFilamentEvolutionWithLambdaChoice} is replaced by 
\begin{align}
    \label{eq:AsymptoticFilamentEvolution}
    \tfrac{\partial}{\partial\tau}\big\vert_{\tau=0}\gamma =  -\varrho\, T\times \tfrac{dT}{ds}  -c_2\tfrac{d\varrho}{ds}\,T.
\end{align}
Since the tangential component of \teqref{eq:AsymptoticFilamentEvolution} merely amounts to reparametrizations, so that the evolution of the curve's geometry is determined by the component normal to \(T\) that, after reparametrization by arc-length, agrees with the left-hand side of condition \ref{Equiv3} of \thmref{thm:3dEquivalences}. Therefore, as a corollary of \thmref{thm:3dEquivalences} we conclude:

\begin{corollary}
 Let \(\gamma \in C^\infty(S^1; \mathbb{R}^3)\) be a curve with variable bending stiffness \(\varrho \in C^\infty(S^1; \mathbb{R}_{>0})\). Then \(\gamma\) is holonomy constrained elastic if and only if, when regarded as a thin vortex filament with variable thickness \(a\), its evolution under equation~\eqref{eq:FiniteThicknessVortexEvolution} (modulo reparametrization) asymptotically approaches an infinitesimal  Euclidean motion as \(a \to 0\).
\end{corollary}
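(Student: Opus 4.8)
The plan is to read the corollary as a direct translation of \thmref{thm:3dEquivalences}, using the asymptotic evolution \eqref{eq:AsymptoticFilamentEvolution} derived above. First I would fix the meaning of the terms in the statement. By an \emph{infinitesimal Euclidean motion} I mean a Killing field of \(\RR^3\), i.e.\ a vector field of the form \(x\mapsto \mathbf{c}\times x+\mathbf{d}\) with constant \(\mathbf{c},\mathbf{d}\in\RR^3\); and the phrase ``the filament evolution equals such a motion modulo reparametrization'' means that \(\tfrac{\partial}{\partial\tau}\gamma\) and the restriction \(s\mapsto\mathbf{c}\times\gamma(s)+\mathbf{d}\) of a fixed Killing field differ by a purely tangential vector field along \(\gamma\) --- equivalently (as already noted in the text preceding the corollary), that their components normal to \(T\) coincide.

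Second, I would isolate the normal part of \eqref{eq:AsymptoticFilamentEvolution}: the summand \(-c_2\tfrac{d\varrho}{ds}\,T\) is tangential and \(T\times\tfrac{dT}{ds}\) is orthogonal to \(T\), so the component of \(\tfrac{\partial}{\partial\tau}\gamma\) normal to \(T\) is exactly \(-\varrho\,T\times\tfrac{dT}{ds}\); for an arc-length parameterization this is \(-\varrho\,\gamma'\times\gamma''\). Consequently, ``the asymptotic evolution is an infinitesimal Euclidean motion modulo reparametrization'' is equivalent to the existence of constants \(\mathbf{c},\mathbf{d}\in\RR^3\) with
\[
-\varrho\,\gamma'\times\gamma'' \;=\; (\mathbf{c}\times\gamma+\mathbf{d})-\langle\mathbf{c}\times\gamma+\mathbf{d},\,T\rangle\,T.
\]

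Third, I would match this with \thmref{thm:3dEquivalences}. For the forward implication: if \(\gamma\) is holonomy constrained elastic, condition \ref{Equiv3} gives \(\varrho(\gamma'\times\gamma'')=-\mu T+\mathbf{a}\times\gamma+\mathbf{b}\); since the left-hand side is orthogonal to \(T\) it equals the normal projection of \(\mathbf{a}\times\gamma+\mathbf{b}\), and taking \(\mathbf{c}=-\mathbf{a}\), \(\mathbf{d}=-\mathbf{b}\) reproduces the displayed identity. For the converse: crossing the displayed identity on the right with \(T=\gamma'\) and using \((\gamma'\times\gamma'')\times\gamma'=\gamma''\) (valid for a unit-speed curve) together with \(T\times T=0\), the tangential correction drops out and one obtains \(\varrho\,\gamma''=\big((-\mathbf{c})\times\gamma+(-\mathbf{d})\big)\times\gamma'\), which is exactly condition \ref{Equiv4}. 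In both directions \thmref{thm:3dEquivalences} then yields the claim; note that it is the detour through \ref{Equiv4} that certifies the scalar \(\mu\) arising in \ref{Equiv3} is constant, so no extra argument is needed on that point.

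The routine parts are the two identities just used and the sign bookkeeping. The only genuine subtlety, and the step I would treat most carefully, is the asymptotic clause: one has to spell out that ``\(a\to0\)'' is taken along the family \(a(s)=a_1^{\varrho(s)}\) with \(\varepsilon=\log a_0/\log a_1\to0\), in the rescaled time \(\tau=\lambda t\), \(\lambda\sim\log a_1\), under which \eqref{eq:RetimedVortexFilamentEvolutionWithLambdaChoice} converges to \eqref{eq:AsymptoticFilamentEvolution} because the Biot--Savart term is \(O(\varepsilon)\); and that ``modulo reparametrization'' is precisely what reduces the comparison with a rigid motion to equality of the components normal to \(T\). Once this setup is in place, the corollary is an immediate consequence of \thmref{thm:3dEquivalences}.
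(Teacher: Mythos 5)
Your proposal is correct and follows essentially the same route as the paper, which derives the corollary from the asymptotic evolution \eqref{eq:AsymptoticFilamentEvolution} by discarding the tangential term and matching the normal component \(-\varrho\,\gamma'\times\gamma''\) against conditions \ref{Equiv3}/\ref{Equiv4} of \thmref{thm:3dEquivalences}. You in fact supply more detail than the paper does (the explicit Killing-field formulation, the cross-product computation certifying the converse, and the constancy of \(\mu\) via \ref{Equiv4}), all of which is consistent with the paper's argument.
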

However, the motion of the vortex filament does correspond to a global rigid body motion, since the filament’s thickness also evolves over time (see, \eg,~\cite{Padilla:2019:BRI}). Even if the instantaneous evolution corresponds to an infinitesimal rigid body motion, a time-dependent change in thickness modifies the bending stiffness. As a result, unless the thickness is already constant, the curve will not remain elastic at the next time instance, and its geometry will deform. In the special case of constant thickness, we recover a statement analogous to that known for infinitely thin vortex filaments:

\begin{corollary}[{\cite[Cor. 2 (v)]{chern_knoppel_pedit_pinkall_2020}}]
    A curve \(\gamma\in C^\infty(S^1;\RR^3)\) is holonomy constrained elastic if and only if the vortex filament flow \(\dot\gamma = \gamma'\times \gamma''\) evolves, modulo reparametrization, by a Euclidean motion with axis along the monodromy of the initial curve \(\gamma\).
\end{corollary}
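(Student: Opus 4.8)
The plan is to read the statement off as the classical case $\varrho\equiv 1$ of \thmref{thm:3dEquivalences} (equivalently, the constant–thickness refinement of the preceding corollary), the only genuinely new ingredient being the identification of the axis. Assume $\gamma$ is parametrized by arc length, as is required for it to be elastic. Specializing condition \ref{Equiv3} of \thmref{thm:3dEquivalences} to $\varrho\equiv 1$ gives
\[
    \gamma'\times\gamma'' = -\mu\,T + \textbf{a}\times\gamma + \textbf{b},
\]
so that the velocity field $\gamma'\times\gamma''$ of the vortex filament flow differs from the restriction along $\gamma$ of the Killing field $X\colon x\mapsto \textbf{a}\times x + \textbf{b}$ of $\RR^3$ only by the tangential term $-\mu T$. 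Since $\gamma'\times\gamma''\perp T$ for an arc-length parametrized curve, $-\mu T$ is exactly the tangential component of $X|_\gamma$; hence, modulo reparametrization, the instantaneous motion of $\gamma$ under the flow coincides with its motion under the infinitesimal Euclidean motion $X$. For the converse, if the normal component of $X|_\gamma$ equals $\gamma'\times\gamma''$ for some infinitesimal Euclidean motion $X(x) = \omega\times x + v$, then crossing $\dot\gamma = \gamma'\times\gamma''$ with $T=\gamma'$ from the left and using $\gamma'\times(\gamma'\times\gamma'') = -\gamma''$ yields $\gamma'' = (\omega\times\gamma + v)\times\gamma'$, which is condition \ref{Equiv4} with $\textbf{a}=\omega$, $\textbf{b}=v$; so $\gamma$ is holonomy constrained elastic by \thmref{thm:3dEquivalences}.

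To upgrade ``the instantaneous velocity is a Killing field'' to ``$\gamma$ evolves by a Euclidean motion'', I would invoke the standard equivariance/uniqueness argument: the vortex filament flow is equivariant under the group of Euclidean motions, and $X$ integrates to a one–parameter subgroup $(g_t)$ of that group; each $g_t\cdot\gamma_0$ is again holonomy constrained elastic (the conditions of \thmref{thm:3dEquivalences} are invariant under Euclidean motions), so $t\mapsto g_t\cdot\gamma_0$ is, up to reparametrization, a solution of the flow with the same initial datum, whence $\gamma_t = g_t\cdot\gamma_0$ modulo reparametrization by uniqueness.

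It remains to pin down the axis. The one–parameter group $(g_t)$ generated by $X(x)=\textbf{a}\times x + \textbf{b}$ is a screw motion whose rotational part has angular velocity $\textbf{a}$, so (for $\textbf{a}\neq 0$) its axis is the line in the direction of $\textbf{a}$. The final step is then to identify $\RR\,\textbf{a}$ with the monodromy of $\gamma$: in the framework of \cite{chern_knoppel_pedit_pinkall_2020} this is essentially the definition of the monodromy axis of a generic closed elastic curve, and it can be verified intrinsically by reading condition \ref{Equiv2} (for $\varrho\equiv 1$) as a linear ODE along $\gamma$ and comparing it with the parallel–transport equation $Z' = \lambda T$, so as to see that $\textbf{a}$ is compatible with the monodromy. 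I expect this last bookkeeping---keeping straight the relationship between $\textbf{a}$, the multipliers $\Lambda$, $\mu$, and the holonomy---to be the only real obstacle; everything else is a direct specialization of \thmref{thm:3dEquivalences}.
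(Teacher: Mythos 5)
Your proposal is correct and takes essentially the same route as the paper: the paper presents this corollary as the constant-thickness specialization of its asymptotic vortex-filament analysis, which likewise rests on condition \ref{Equiv3} of \thmref{thm:3dEquivalences} identifying \(\varrho(\gamma'\times\gamma'')\) with the normal part of the Killing field \(x\mapsto \ba\times x+\bb\), plus the standard equivariance/uniqueness argument to pass from an infinitesimal to a global Euclidean motion. The identification of the axis with the monodromy is deferred to the cited reference \cite{chern_knoppel_pedit_pinkall_2020} in the paper as well, so the bookkeeping you flag as incomplete is left equally implicit in the paper's own treatment.
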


\section*{Acknowledgements}
This work was funded in part by the Deutsche Forschungsgemeinschaft (DFG - German Research Foundation) - Project-ID 195170736 - TRR109 ``Discretization in Geometry and Dynamics.'' Additional support was provided through Houdini software, courtesy of SideFX. The authors thank Prof. Albert Chern for helpful discussions on vortex filament dynamics, Dr. Quentin Becker for help with the numerical implementation and the anonymous reviewers for their thoughtful comments improving the manuscript.


\appendix
\section{Notes on Visualizations}
\label{sec:Visualizations}
Throughout the paper we show representative examples encountered during exploration, rather than finely tuned cases. All examples presented in this paper were obtained by integrating ODEs that are equivalent to the Euler--Lagrange equations. Specifically, we use a standard RK4 implementation to solve \teqref{eq:PendulumEqDefiningEq} from \thmref{thm:PendulumStiffness} to obtain elastic curves, as seen in Figs. \ref{fig:curve1_cst_and_sin}, \ref{fig:curve1_agnesi_triple.} and \ref{fig:InflectionPointsLine}, while for holonomy constrained elastic curves we solve condition 5 in \thmref{thm:3dEquivalences}, as seen in \figref{fig:Elastic_HeavyTorsion}. We use either a sinusoidal type of bending stiffness of the form
\begin{equation*}
    \label{eq:SinusoidalBump}
    \varrho(s) = A \sin(s + \xi) + c,
\end{equation*} 
or constant bending stiffness modified by Gaussian bump functions, \ie, 
\begin{equation*}
    \varrho(s)= c + A\exp(-\tfrac{(s-\xi)^2}{2\sigma^2}),
\end{equation*}
for \(A,c,\sigma>0\) and \(\xi\in\RR\). 

For the closed examples, which we deem the most interesting examples, we provide specific parameters. All of the closed examples are obtained by integration of condition \ref{Equiv4} of \thmref{thm:3dEquivalences} and with sinusoidal bending stiffness with \(A=1\) and \(c=1.5\). The initial values are given by \(\gamma(0)=(0,0,0)\) and \(\gamma'(0)=(1,0,0)\). The first row of \figref{fig:PlanarClosedElastica_nfold_Symmetries} 
 is obtained with parameters \(\boldsymbol{a} = 0\), and
\[
\begin{array}{l|c|c|c|c}
 & \text{top left} & \text{top middle left} & \text{top middle right} & \text{top right} \\
\hline
\bb 
& (0,\ 0.37274,\ 0) & (0,\ 0.74556,\ 0) & (0,\ 1.4876,\ 0) & (0,\ 2.6064,\ 0) \\
\xi 
& 4.1314 & 4.6710 & 4.7618 & 4.7360 \\
\end{array}
\]
for the top row, while for the bottom row
\[
\begin{array}{l|c|c|c|c}
 & \text{bottom left} & \text{bottom middle left} & \text{bottom middle right} & \text{bottom right} \\
\hline
\bb 
& (0,\ 0.2234,\ 0) & (0,\ 0.67043,\ 0) & (0,\ 1.3420,\ 0) & (0,\ 2.4584,\ 0) \\
\xi 
& 1.5859 & 1.5859 & 1.5784 & 1.5862 \\
\end{array}
\]
Moreover, the examples shown in \figref{fig:3DElastic_embedded} are obtained with
\[
\begin{array}{l|c|c}
 & \text{left} & \text{right} \\
\hline
\ba & (-1.8502 \times 10^{-6},\ 0,\ 1.2388 \times 10^{-1}) 
  & (-7.4559 \times 10^{-5},\ -6.3287 \times 10^{-13},\ 7.5643 \times 10^{-2}) \\
\bb & (-0.1344,\ 0.1717,\ -0.7235) 
  & (0.0927,\ 0.0081,\ 1.4851) \\
\xi & 27.9713 & 0.1007 \\
\end{array}
\]
while for \figref{fig:TwoAndThreefoldProteinLike}
\[
\begin{array}{l|c}
\ba & (3.8630\times 10^{-2},\  -6.1663\times 10^{-4},\  23.154) 
   \\
\bb & (-2.8675,\  3.6121,\ -9.5421) 
   \\
\xi & 36.3588 \\
\end{array}
\]   

Since a generic choice of parameters will not yield closed curves, we approximate closed curves 
that satisfy the Euler--Lagrange equations, by optimizing the parameter set \(\Theta\)---\(\mathbf{a},\mathbf{b}\in\mathbb{R}^3\), the curve's length \(L\), and shift \(\xi\in\mathbb{R}\) in the bending‐stiffness profile \(\varrho(s)\mapsto\varrho(s+\xi)\)---so that closing conditions hold. 
Concretely, we define an objective function \(J(\Theta)\) encoding the residual at the endpoints after forward-integrating the ODE with a differentiable implementation of an RK4 solver. Using PyTorch’s reverse‐mode automatic differentiation~\cite{Paszke:2019:Pytorch}, we compute gradients \(\tfrac{\partial J}{\partial \Phi}\) and solve  
\[
\argmin_{\Theta} J(\Theta)
\]  
via standard gradient-based optimizers.

\printbibliography

\end{document}